\numberwithin{equation}{section}
\newtheorem{thm}{Theorem}[section]
\newtheorem{prop}[thm]{Proposition}
\newtheorem{lemma}[thm]{Lemma}
\newtheorem{examples}[thm]{Examples}
\newtheorem{assum}[thm]{Assumption}
\newtheorem{defn}[thm]{Definition}
\newtheorem{remarks}[thm]{Remarks}
\newenvironment{proof}[1][Proof]{\textbf{#1.} }{\ \rule{0.5em}{0.5em}}
\newcommand{\var}{{\rm Var} \mspace{1mu}}
\DeclareMathOperator{\arctanh}{arctanh}
\renewcommand{\labelenumi}{\alph{enumi}.)}
\begin{document}

\title{Convergence of the Population Dynamics algorithm \\ in the Wasserstein metric}

\author{Mariana Olvera-Cravioto}

\maketitle

\begin{abstract}

We study the convergence of the population dynamics algorithm, which produces sample pools of random variables having a distribution that closely approximates that of the {\em special endogenous solution} to a stochastic fixed-point equation of the form:
$$R\stackrel{\mathcal D}{=} \Phi( Q, N, \{ C_i \}, \{R_i\}),$$
where $(Q, N, \{C_i\})$ is a real-valued random vector with $N \in \mathbb{N}$, and $\{R_i\}_{i \in \mathbb{N}}$ is a sequence of i.i.d.~copies of $R$, independent of $(Q, N, \{C_i\})$; the symbol $\stackrel{\mathcal{D}}{=}$ denotes equality in distribution. Specifically, we show its convergence in the Wasserstein metric of order $p$ ($p \geq 1$) and prove the consistency of estimators based on the sample pool produced by the algorithm.  

{\em Keywords:} Population dynamics; iterative bootstrap; Wasserstein metric; distributional fixed-point equations.
\end{abstract}

\section{Introduction}
\label{sec:intro}

We study an iterative bootstrap algorithm, known as the ``population dynamics" algorithm, that can be used to efficiently generate samples of random variables whose distribution closely approximates that of the so-called special endogenous solution to a stochastic fixed-point equation (SFPE) of the form:
\begin{equation}\label{eq:SFPE}
    R\stackrel{\mathcal D}{=} \Phi( Q, N, \{ C_i \}, \{R_i\}) ,
\end{equation}
where $(Q, N, \{C_i\})$ is a real-valued random vector with $N \in \mathbb{N} = \{0, 1, 2, \dots\}$, and $\{R_i\}_{i \in \mathbb{N}}$ is a sequence of i.i.d. copies of $R$, independent of $(Q, N, \{C_i\})$. These equations appear in a variety of problems, ranging from computer science to statistical physics, e.g.: in the analysis of divide and conquer algorithms such as Quicksort \cite{Rosler_91,Fill_Jan_01,Ros_Rus_01} and {\tt FIND}  \cite{Devroye_01}, the analysis of Google's PageRank algorithm \cite{Volk_Litv_08,Jel_Olv_10, Chen_Litv_Olv_14, Lee_Olv_17}, the study of queueing networks with synchronization requirements \cite{Kar_Kel_Suh_94, Olv_Ruiz_14}, and the analysis of the Ising model \cite{Dem_Mon_10}, to name a few. In general, SPFEs of the form in \eqref{eq:SFPE} can have multiple solutions, but in most cases we are interested in computing those that can be explicitly constructed on a weighted branching process, known as {\em endogenous} solutions. In some cases, even the endogenous solution is not unique \cite{Alsm_Mein_10b}, but characterizing all endogenous solutions can be done using the {\em special endogenous} solution, which is the only attracting solution, and can be constructed by iterating \eqref{eq:SFPE} starting from some well-behaved initial distribution. 

This work focuses on the analysis of a simulation algorithm that can be used to generate samples from a distribution that closely approximates that of the special endogenous solution to a variety of SFPEs. The need for such an approximate algorithm lies on the numerical complexity of simulating even a few generations of a weighted branching process using naive Monte Carlo methods. The population dynamics algorithm, described in \S14.6.4 in \cite{Mezard_Montanari_2009} and \S8.1 in \cite{Aldo_Band_05}, circumvents this problem by resampling with replacement from previously computed iterations of \eqref{eq:SFPE}, i.e., by using an iterative bootstrap technique. However, as is the case with the standard bootstrap algorithm, the samples obtained are neither independent nor exactly distributed according to the target distribution, which raises the need to study the convergence properties of the algorithm.

Before presenting the algorithm and stating our main results, it may be helpful to describe in more detail some of the examples mentioned above. Throughout the paper, we use $x \vee y = \max\{x, y\}$  and $x \wedge y = \min\{x,y\}$ to denote the maximum and the minimum, respectively, of $x$ and $y$. 
\begin{itemize}
\item The linear SFPE or ``smoothing transform": 
\begin{equation} \label{eq:Linear}
R\stackrel{\mathcal D}{=} Q + \sum_{i=1}^{N} C_i R_i,
\end{equation}
appears in the analysis of the number of comparisons required by the sorting algorithm Quicksort \cite{Rosler_91,Fill_Jan_01,Ros_Rus_01}, and can also be used to describe the distribution of the ranks computed by Google's PageRank algorithm on directed complex networks \cite{Volk_Litv_08,Jel_Olv_10,Chen_Litv_Olv_14, Lee_Olv_17}.

\item The maximum SFPE or ``high-order Lindley equation": 
\begin{equation}\label{eq:Max}
    R\stackrel{\mathcal D}{=} Q \vee \bigvee_{i=1}^{N} C_i R_i, \qquad \text{equivalently,} \qquad X \stackrel{\mathcal D}{=} T \vee \bigvee_{i=1}^N ( \xi_i + X_i),
\end{equation}
arises as the limiting waiting time distribution on queueing networks with parallel servers and synchronization requirements \cite{Kar_Kel_Suh_94, Olv_Ruiz_14} and in the analysis of the branching random walk \cite{Aldo_Band_05}.

\item The discounted tree-sum SFPE: 
\begin{equation} \label{eq:DiscountedTrees}
R \stackrel{\mathcal{D}}{=} Q + \bigvee_{i=1}^N C_i R_i
\end{equation}
appears in the  worst-case analysis of the {\tt FIND} algorithm \cite{Devroye_01} and the analysis of the ``discounted branching random walk" \cite{Athreya_85}.

\item The ``free-entropy" SFPE:
\begin{equation} \label{eq:IsingModel}
R \stackrel{\mathcal{D}}{=} Q + \sum_{i=1}^{N} \arctanh ( \tanh(\beta) \tanh( R_i))
\end{equation}
characterizes the asymptotic free-entropy density in the ferromagnetic Ising model on locally tree-like graphs \cite{Dem_Mon_10}. In this case, $C_i \equiv \tanh(\beta)$ for all $i \geq 1$,  $\beta \geq 0$ represents the ``inverse temperature", and $Q$  the magnetic field. 

\item Although the analysis presented here does not directly apply to this case, we mention that the population dynamics algorithm can also be used to simulate the fixed points of the belief propagation equations on random graphical models \cite{Mezard_Montanari_2009}: 
\begin{equation*}
R \stackrel{\mathcal{D}}{=} \Phi\left( Q, N, \{ C_i\}, \{\tilde R_i\} \right) \qquad \text{and} \qquad \tilde R \stackrel{\mathcal{D}}{=} \Psi\left( \tilde Q, \tilde N, \{ \tilde C_i\}, \{ R_i\} \right), 
\end{equation*}
where the $\{\tilde R_i\}$ are i.i.d.~copies of $\tilde R$ independent of the vector $(Q, N, \{C_i\})$ and the $\{R_i\}$ are i.i.d.~copies of $R$ independent of the vector $(\tilde Q, \tilde N, \{ \tilde C_i \})$, with $\Phi$ and $\Psi$ potentially different. 
\end{itemize}
We refer the reader to  \cite{Aldo_Band_05} for even more examples, including some involving minimums. 

The existence and uniqueness of solutions to any of these SFPEs is in itself a non-trivial problem. We refer the reader again to \cite{Aldo_Band_05} for a broad survey of known results and open problems on this topic. The most well-studied equations are the linear \eqref{eq:Linear} and maximum \eqref{eq:Max} SFPEs, which have been extensively studied in \cite{Liu_98, Iksanov_04, Als_Big_Mei_10, Alsm_Mein_10a, Alsm_Mein_10b, Als_Dysz_17, Janson_15} and \cite{Biggins_98, Jel_Olv_15}, respectively.  However, to provide some context to where the population dynamics algorithm fits in, we briefly mention that the existence of solutions is often established by showing that the transformation $T$ that maps the distribution $\mu$ on $\mathbb{R}$ to the distribution of 
$$\Phi\left( Q, N, \{ C_i\}, \{ X_i\} \right),$$
where the $\{X_i\}$ are i.i.d.~random variables distributed according to $\mu$, independent of the vector $(Q, N, \{C_i\})$, is strictly contracting under some suitable metric. Note that in this case, we have that the sequence of probability measures $\mu_{n+1} = T(\mu_n)$ converges as $n \to \infty$ to a fixed point of \eqref{eq:SFPE}.  Moreover, as long as the initial distribution $\mu_0$ has sufficiently light tails, one can show that $\{\mu_n\}$ converges to the special endogenous solution to \eqref{eq:SFPE}, and the contracting nature of $T$ provides an upper bound of the form 
$$d(\mu_n, \mu) \leq d(T(\mu_{n-1}), T(\mu)) \leq c d(\mu_{n-1}, \mu) \leq c^n d(\mu_0,\mu), \qquad n = 1, 2, \dots,$$
for some constant $0 < c < 1$, where $d$ is the distance under which $T$ is a contraction.

As will be discussed in more detail later (see Examples~\ref{E.Examples}), all the examples provided earlier define contractions under $d_p$, the Wasserstein metric of order $p$, for some $p \geq 1$. For completeness, we also include a result (Theorem~\ref{T.LpConvergence}) that gives easy to verify conditions guaranteeing that 
$$E\left[ \left| R^{(k)} - R \right|^\beta \right] \leq c^k$$
for some $0 < c < 1$, where $R^{(k)}$ and $R$ have distributions $\mu_k$ and $\mu$, respectively.

It follows that from a computational point of view, it suffices to have an algorithm for computing $\mu_k$ for a fixed number of iterations $k \in \mathbb{N}$.  The population dynamics algorithm produces a sample of observations approximately distributed according to $\mu_k$, which can also be helpful in searching for the existence of endogenous solutions, as stated in \cite{Aldo_Band_05}. We now describe how to obtain an exact sample of $\mu_k$, which will also make clear the need for a computationally efficient method.

\subsection{Constructing endogenous solutions on weighted branching processes} \label{SS.EndogenousSol}

As mentioned earlier, the attracting endogenous solution to \eqref{eq:SFPE}, provided it exists, can be constructed on a structure known as a weighted branching process \cite{Rosler_91}. We now elaborate on this point.

Let $\mathbb{N}_+ = \{1, 2, 3, \dots\}$ be the set of positive integers and let $U = \bigcup_{k=0}^\infty (\mathbb{N}_+)^k$ be the set of all finite sequences ${\bf i} = (i_1, i_2, \dots, i_n)$, $n\ge 0$, where by convention $\mathbb{N}_+^0 = \{ \emptyset\}$ contains the null sequence $\emptyset$. To ease the exposition, we will use $({\bf i}, j) = (i_1,\dots, i_n, j)$ to denote the index concatenation operation. Next, let $(Q, N, \{C_i\}_{i\geq 1})$ be a real-valued vector with $N \in \mathbb{N}$. We will refer to this vector as the generic branching vector. Now let $\{ (Q_{\bf i}, N_{\bf i}, \{C_{({\bf i}, j)}\}_{j\geq 1} \}_{{\bf i} \in U}$ be a sequence of i.i.d.~copies of the generic branching vector. To construct a weighted branching process we start by defining a tree as follows: let $A_0 = \{ \emptyset\}$ denote the root of the tree, and define the $n$th generation according to the recursion
$$A_n = \{ ({\bf i}, i_n) \in U: {\bf i} \in A_{n-1}, 1 \leq i_n \leq N_{\bf i} \}, \quad n \geq 1.$$
Now, assign to each node ${\bf i}$ in the tree a weight $\Pi_{\bf i}$ according to the recursion
$$\Pi_\emptyset \equiv 1, \qquad \Pi_{({\bf i}, i_n)} = C_{({\bf i}, i_n)} \Pi_{\bf i}, \qquad n \geq 1, $$
see Figure \ref{F.Tree}. If $P(N< \infty)=1$ and $C_i \equiv 1$ for all $i \geq 1$, the weighted branching process reduces to a Galton-Watson process.

\begin{figure}[t]
\centering
\begin{picture}(350,110)(0,0)
\put(27,8){\includegraphics[scale = 0.75]{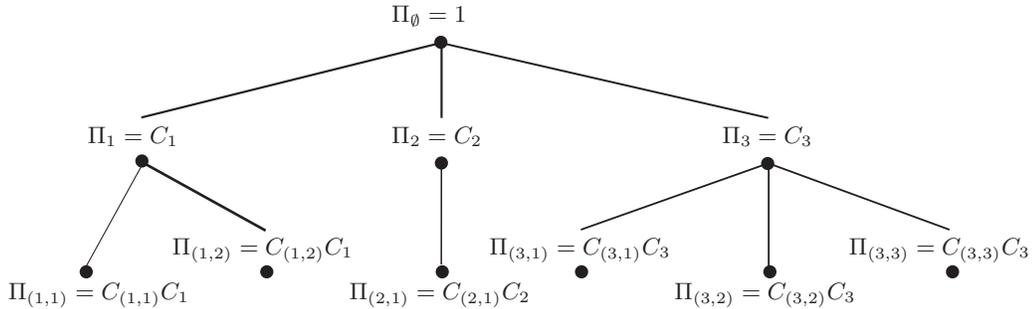}}
\put(145,105){\footnotesize $\Pi_\emptyset = 1$}
\put(30,59){\footnotesize $\Pi_{1} = C_1$}
\put(145,59){\footnotesize $\Pi_{2} = C_2$}
\put(270,59){\footnotesize $\Pi_{3} = C_3$}
\put(0,0){\footnotesize $\Pi_{(1,1)} = C_{(1,1)} C_1$}
\put(62,17){\footnotesize $\Pi_{(1,2)} = C_{(1,2)} C_1$}
\put(129,0){\footnotesize $\Pi_{(2,1)} = C_{(2,1)} C_2$}
\put(182,17){\footnotesize $\Pi_{(3,1)} = C_{(3,1)} C_3$}
\put(252,0){\footnotesize $\Pi_{(3,2)} = C_{(3,2)} C_3$}
\put(318,17){\footnotesize $\Pi_{(3,3)} = C_{(3,3)} C_3$}
\end{picture}
\caption{Weighted branching process}\label{F.Tree}
\end{figure}

To generate a sample from $\mu_k$ we first need to fix the initial distribution $\mu_0$, e.g., by letting $\mu_0$ be the probability measure of a constant, say zero or one. Now construct a weighted branching process with $k$ generations, and let $\{R^{(0)}_{\bf i} \}_{{\bf i} \in A_k}$ be i.i.d.~random variables having distribution $\mu_0$. Next, define recursively for each ${\bf i} \in A_{k-r}$, $1 \leq r \leq k$, 
$$R^{(r)}_{\bf i} = \Phi\left( Q_{\bf i}, N_{\bf i}, \{ C_{({\bf i},j)}\}_{j \geq 1}, \{ R^{(r-1)}_{({\bf i},j)} \}_{j \geq 1} \right).$$
The random variable $R^{(k)}_\emptyset$ is distributed according to $\mu_k$, and its generation requires on average $(E[N])^k$ i.i.d.~copies of the generic branching vector $(Q, N, \{C_i\}_{i\geq 1})$. It follows that if the goal was to obtain an i.i.d.~sample of size $m$ from distribution $\mu_k$, one would need to generate on average $m (E[N])^k$ copies of the generic branching vector. However, in applications one typically has $E[N] > 1$, e.g., $N \equiv 2$ for Quicksort, $E[N] \approx 30$ in the analysis of PageRank on the WWW graph, and $E[N]$ can be in the hundreds for MapReduce implementations related to the maximum SFPE. This makes the exact simulation of $R^{(k)}$ using a weighted branching process impractical. 

The population dynamics algorithm, described below, uses a bootstrap approach to produce a sample of size $m$ of random variables that are approximately distributed according to $\mu_k$, and that although not independent, can be used to obtain consistent estimators for moments, quantiles and other functions of $\mu_k$.

\subsection{The population dynamics algorithm}

The population dynamics algorithm is based on the bootstrap, i.e., in the idea of sampling with replacement random variables from a common pool.  As described above, the algorithm starts by generating a sample of i.i.d.~random variables having distribution $\mu_0$, with the difference that when computing the next level of the recursion, it samples with replacement from this pool as needed by the map $\Phi$. In other words,  to obtain a pool of approximate copies of $R^{(j)}$ we bootstrap from the pool previously obtained of approximate copies of $R^{(j-1)}$. The approximation lies in the fact that we are not sampling from $R^{(j-1)}$ itself, but from a finite sample of conditionally independent observations that are only approximately distributed as $R^{(j-1)}$. The algorithm is described below.

Let $(Q,N, \{C_r\})$ denote the generic branching vector defining the weighted branching process. Let $k$ be the depth of the recursion that we want to simulate, i.e., the algorithm will produce a sample of random variables approximately distributed according to $\mu_k$. Choose $m\in \mathbb N_+$ to be the bootstrap sample size. For each $0 \leq j \leq k$, the algorithm outputs $\mathscr{P}^{(j,m)} \triangleq \left(\hat R^{(j,m)}_1, \hat R^{(j,m)}_2,\dots,\hat R^{(j,m)}_m\right)$, which we refer to as the sample pool at level $j$.
\begin{enumerate}
 \item \emph{Initialize}: Set $j=0$. Simulate a sequence $\{R^{(0)}_i\}_{i = 1}^m$ of i.i.d.~random variables distributed according to some initial distribution $\mu_0$. Let $\hat R^{(0,m)}_i = R^{(0)}_i$ for $i = 1, \dots, m$. \\
 Output  $\mathscr{P}^{(0,m)}=\left(\hat R^{(0,m)}_1, \hat R^{(0,m)}_2,\dots,\hat R^{(0,m)}_m\right)$ and update $j = 1$.

\item While $j \leq k$:

\begin{enumerate}
\item Simulate a sequence $\{ (Q_i^{(j)}, N_i^{(j)}, \{ C_{(i,r)}^{(j)} \}_{r \geq 1} ) \}_{i =1}^m$ of i.i.d. copies of the generic branching vector, independent of everything else.

\item Let
        \begin{equation}
            \hat R^{(j,m)}_i = \Phi\left( Q_i^{(j)}, N_i^{(j)}, \{ C_{(i,r)}^{(j)} \}, \{  \hat R^{(j-1,m)}_{(i,r)}\} \right), \qquad i=1,\dots,m,\label{eq:bootstrap-recur}
        \end{equation}
        where the $\hat R^{(j-1,m)}_{(i,r)}$ are sampled uniformly with replacement from the pool $\mathscr{P}^{(j-1,m)}$.

\item Output $\mathscr{P}^{(j,m)}=\left(\hat R^{(j,m)}_1, \hat R^{(j,m)}_2,\dots,\hat R^{(j,m)}_m\right)$ and update $j = j+1$.

\end{enumerate}

\end{enumerate}

    We conclude this section by pointing out that the complexity of the algorithm described above is of order $k m$, while the naive Monte Carlo approach described earlier, which consists on sampling $m$ i.i.d. copies of a weighted branching process up to the $k$th generation, has order $(E[N])^k m$.  Our main results establish the convergence of the algorithm in the Wasserstein metric of order $p$ ($p \geq 1$), as well as the consistency of estimators constructed using the pool $\mathscr{P}^{(k,m)}$. The following section contains all the statements, and the proofs are given in Section~\ref{S.Proofs}.

\section{Main results} 

We start by defining the Wasserstein metric of order $p$. 

\begin{defn} \label{d.wasserstein}
    Let $M(\mu, \nu)$ denote the set of joint probability measures on $\mathbb R \times \mathbb R$ with marginals $\mu$ and $\nu$. Then, the Wasserstein metric of order $p$ ($1 \leq p < \infty$) between $\mu$ and $\nu$ is given by
$$d_p(\mu, \nu) = \inf_{\pi \in M(\mu, \nu)} \left(  \int_{\mathbb R\times \mathbb R} | x - y |^p \, d \pi(x, y) \right)^{1/p}.$$
\end{defn}

An important advantage of working with the Wasserstein metrics is that on the real line they admit the explicit representation 
  \begin{equation} \label{eq:Explicit}
  d_p(\mu, \nu) = \left( \int_{0}^1 | F^{-1}(u) - G^{-1}(u) |^p du \right)^{1/p} ,
  \end{equation}
where $F$ and $G$ are the cumulative distribution functions of $\mu$ and $\nu$, respectively, and $f^{-1}(t) = \inf\{ x \in \mathbb{R}: f(x) \geq t\}$ denotes the generalized inverse of $f$. It follows that the optimal coupling of two real random variables $X$ and $Y$ is given by $(X, Y) = (F^{-1}(U), G^{-1}(U))$, where $U$ is uniformly distributed in $[0, 1]$.

With some abuse of notation, we use $d_p(F,G)$ to denote the Wasserstein distance of order $p$ between the probability measures $\mu$ and $\nu$, where $F(x) = \mu((-\infty, x])$ and $G(x) = \nu((-\infty, x])$ are their corresponding cumulative distribution functions.

Our main results establish the convergence of $d_p(\hat F_{k,m}, F_k)$ as $m \to \infty$, both in mean and almost surely, where 
$$\hat F_{k,m}(x) = \frac{1}{m} \sum_{i=1}^m 1(\hat R_i^{(k,m)} \leq x) \qquad \text{and} \qquad F_k(x) = \mu_k((-\infty, x]), \qquad k \in \mathbb{N},$$
and $\mathscr{P}^{(k,m)} = \left( \hat R_1^{(k,m)}, \dots, \hat R_m^{(k,m)} \right)$ is the pool generated by the population dynamics algorithm.  The theorems are proven under two different assumptions, the first one imposing a Lipschitz condition on the mean of $\Phi$, and the second one requiring $\Phi$ to be Lipschitz continuous almost surely. 

\begin{assum} \label{A.PhiAssumption} 
For some $p \geq 1$ there exist a constant $0 < H_p < \infty$ such that if $\{ (X_i, Y_i) : i \geq 1 \}$ is a sequence of i.i.d.~random vectors, independent of $(Q, N, \{C_r\})$, then
$$E\left[ \left| \Phi(Q, N, \{C_r\}, \{X_r\}) - \Phi(Q, N, \{C_r\}, \{Y_r\}) \right|^p \right] \leq H_p E[ |X_1 - Y_1|^p] .$$
{\sc [linear0]} For the linear SFPE \eqref{eq:Linear}, it suffices that the inequality holds for $\{ X_i \}$ and $\{ Y_i\}$ having the same mean. 
\end{assum} 

\begin{assum} \label{A.Lipschitz}
Suppose that for any vector $(q, n, \{c_r\})$, with $n \in \mathbb{N} \cup \{\infty\}$, and any sequences of numbers $\{ x_r\}$ and $\{y_r\}$ for which $\Phi(q, n, \{c_r\}, \{x_r\})$ and $\Phi(q, n, \{c_r\}, \{x_r\})$ are well defined, there exists a function $\varphi:\mathbb{R} \to \mathbb{R}_+$ such that
$$\left| \Phi(q, n, \{c_r\}, \{x_r\}) - \Phi(q, n, \{c_r\}, \{y_r\}) \right| \leq \sum_{r=1}^n \varphi(c_r) |x_r - y_r|.$$
\end{assum}

\begin{remarks} \label{R.Assumptions}
\begin{enumerate} \renewcommand{\labelenumi}{(\roman{enumi})}

\item To see that Assumption~\ref{A.Lipschitz} implies Assumption~\ref{A.PhiAssumption}, note that Lemma~4.1 in \cite{Jel_Olv_12a} gives that
\begin{align*}
E\left[ \left( \sum_{r=1}^N \varphi(C_r) |X_r - Y_r| \right)^p \right] &\leq  E\left[ \sum_{r=1}^N \varphi(C_r)^p | X_r - Y_r|^p \right]  \\
 &\hspace{5mm} + E\left[ \left( \sum_{r=1}^N \varphi(C_r)  \right)^p \right] \left( E[ |X_1 - Y_1|^{\lceil p \rceil -1}] \right)^{p/(\lceil p \rceil-1)} \\
&\leq 2 E\left[ \left( \sum_{r=1}^N \varphi(C_r)  \right)^p \right] E\left[ |X_1 - Y_1|^p \right],
\end{align*}
and therefore Assumption~\ref{A.PhiAssumption} holds with $H_p = 2E\left[ \left( \sum_{r=1}^N \varphi(C_r)  \right)^p \right]$, provided the expectation is finite. However, much tighter bounds can be obtained for specific examples, and we can usually find $p \geq 1$ such that $H_p < 1$.

\item  The existence of a $p \geq 1$ for which $H_p < 1$ is important for obtaining estimates for the rate of convergence of the algorithm that are uniform in $k$, and has also important implications for the convergence of $R^{(k)} \to R$ as $k \to \infty$, as the next result shows. 

\end{enumerate}
\end{remarks}

\begin{thm} \label{T.LpConvergence}
Suppose Assumption~\ref{A.PhiAssumption} holds for some $p \geq 1$, $H_p < 1$, and any i.i.d.~sequence $\{(X_i, Y_i): i \geq 1\}$ independent of $(Q,N, \{C_r\})$. Then, provided $E\left[ |R^{(0)}|^p + | \Phi(Q, N, \{C_r\}, \{ 0\})|^p \right] < \infty$, there exists a random variable $R$ and constants $0 \leq c_p < 1$ and $A_p <\infty$ such that
\begin{equation} \label{eq:GeomConv}
E\left[ \left| R^{(k)} - R \right|^p \right] \leq A_p c_p^k \to 0, \qquad k \to \infty,
\end{equation}
where $R^{(k)}$ and $R$ are distributed according to $\mu_k$ and $\mu$, respectively. For the linear SFPE \eqref{eq:Linear}, we have that \eqref{eq:GeomConv} also holds under either of the following conditions:
\begin{itemize}[leftmargin=15pt]
\item[i)] If Assumption~\ref{A.PhiAssumption} {\sc [linear0]} holds and $E[Q] = E[R^{(0)}] = 0$.
\item[ii)] If $E\left[ \left( \sum_{i=1}^N |C_i| \right)^p + |R^{(0)} |^p + |Q|^p \right] < \infty$ and $\rho_1 \vee \rho_p < 1$, where $\rho_\beta \triangleq E\left[ \sum_{i=1}^N |C_i|^\beta \right]$.
\end{itemize}
\end{thm}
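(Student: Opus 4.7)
The plan is to show that the map $T:\mu\mapsto\mathrm{law}(\Phi(Q,N,\{C_r\},\{X_r\}))$ is a strict $d_p$-contraction on the complete metric space $(\mathcal{P}_p(\mathbb{R}), d_p)$ of probability measures on $\mathbb{R}$ with finite $p$th moment, so that Banach's fixed-point theorem yields $\mu$ together with the geometric rate. The key tool is the inverse-CDF coupling from \eqref{eq:Explicit}, which attains the Wasserstein infimum on $\mathbb{R}$.

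First I would verify that each $\mu_k$ has finite $p$th moment: applying Assumption~\ref{A.PhiAssumption} with $\{X_i\}$ i.i.d.\ copies of $R^{(k-1)}$ and $Y_i\equiv 0$, followed by the triangle inequality in $L^p$, gives
$$\|R^{(k)}\|_p \leq \|\Phi(Q,N,\{C_r\},\{0\})\|_p + H_p^{1/p}\|R^{(k-1)}\|_p,$$
which iterates to a uniform bound since $H_p<1$. For the contraction step, let $(X,Y)=(F_k^{-1}(U),F_{k-1}^{-1}(U))$ be the optimal coupling of $\mu_k$ and $\mu_{k-1}$ on $\mathbb{R}$, and take i.i.d.\ copies $\{(X_i,Y_i)\}$ independent of $(Q,N,\{C_r\})$. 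Since $\Phi(Q,N,\{C_r\},\{X_r\})\sim\mu_{k+1}$ and $\Phi(Q,N,\{C_r\},\{Y_r\})\sim\mu_k$, Assumption~\ref{A.PhiAssumption} yields
$$d_p(\mu_{k+1},\mu_k)^p \leq E\left[\left|\Phi(Q,N,\{C_r\},\{X_r\})-\Phi(Q,N,\{C_r\},\{Y_r\})\right|^p\right] \leq H_p\,d_p(\mu_k,\mu_{k-1})^p.$$
Iterating, $d_p(\mu_{k+1},\mu_k)\leq H_p^{k/p}d_p(\mu_1,\mu_0)$ is summable, so $\{\mu_k\}$ is $d_p$-Cauchy and converges to some $\mu\in\mathcal{P}_p(\mathbb{R})$; taking the limit in $\mu_{k+1}=T(\mu_k)$ and using the $d_p$-continuity of $T$ shows $\mu=T(\mu)$. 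A triangle-inequality/geometric-series argument gives $d_p(\mu_k,\mu)^p\leq A_p H_p^k$, and since the $d_p$-infimum on $\mathbb{R}$ is attained, $R^{(k)}$ and $R$ can be placed on a common space realizing $E[|R^{(k)}-R|^p]=d_p(\mu_k,\mu)^p$, proving \eqref{eq:GeomConv} with $c_p=H_p$.

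The linear SFPE requires extra care because Assumption~\ref{A.PhiAssumption}\,{\sc [linear0]} only provides the contraction inequality when $\{X_i\}$ and $\{Y_i\}$ share a common mean, so the inverse-CDF coupling above must be used in a way that preserves this property. Under condition (i), the hypothesis $E[Q]=E[R^{(0)}]=0$ implies $E[R^{(k)}]=0$ for all $k$ by linearity, so the coupled pair $(X,Y)$ automatically has equal marginal means and the argument above carries over verbatim. Under condition (ii), I would track the mean sequence $a_k=E[R^{(k)}]$, which satisfies the affine recursion $a_{k+1}=E[Q]+E[\sum_i C_i]\,a_k$ with $|E[\sum_i C_i]|\leq\rho_1<1$ and thus converges geometrically to $a_\infty=E[Q]/(1-E[\sum_i C_i])$; the centered variables $\tilde R^{(k)}=R^{(k)}-a_k$ then satisfy a time-inhomogeneous mean-zero linear recursion to which the same coupling/contraction argument applies (using $\rho_p<1$), and combining with the geometric decay of $a_k-a_\infty$ yields \eqref{eq:GeomConv}. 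The main obstacle is precisely case (ii): the two constants $\rho_1$ and $\rho_p$ play distinct roles — $\rho_p$ controls the centered recursion while $\rho_1$ drives the mean correction — and the proof must coordinate their geometric rates into a single bound of the form $A_p c_p^k$.
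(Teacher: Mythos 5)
Your argument for the main case and for condition (i) is essentially the paper's own: the inverse-CDF coupling turns Assumption~\ref{A.PhiAssumption} into a $d_p$-contraction, giving a Cauchy sequence in the complete space $\mathcal{P}_p(\mathbb{R})$, and the optimal coupling realizes the $d_p$-distance as an $L^p$ norm; under (i) the hypothesis $E[Q]=E[R^{(0)}]=0$ propagates to $E[R^{(k)}]=0$, so the coupled marginals share the same mean and the {\sc [linear0]} inequality applies unchanged.

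For condition (ii), however, your centering strategy has a genuine gap. After writing $\tilde R^{(k)}=R^{(k)}-a_k$, you want to run the same contraction argument on the centered recursion and claim a rate governed by $\rho_p<1$. But the {\sc [linear0]} hypothesis only guarantees \emph{some} $H_p$; it does not promise $H_p<1$, and nothing in (ii) gives you a $d_p$-contraction with constant below one. Concretely, for mean-zero i.i.d.\ increments $Z_i$ one can at best invoke von Bahr--Esseen (for $1<p\leq 2$) to get $E\bigl|\sum_{i=1}^{N}C_iZ_i\bigr|^p\leq 2\rho_p E|Z_1|^p$, which fails to be a contraction once $\rho_p\geq 1/2$, and for $p>2$ Rosenthal's inequality brings in the quantity $E\bigl[\bigl(\sum_{i=1}^{N}C_i^2\bigr)^{p/2}\bigr]$ with a constant $K_p>1$ that is not controlled by $\rho_1\vee\rho_p<1$. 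So the fixed-point machinery simply does not have a contraction to iterate here. The paper sidesteps this entirely: under (ii) it uses the explicit series representation of $R^{(k)}$ and $R$ on the weighted branching process, $R^{(k)}=\sum_{j=0}^{k-1}\sum_{{\bf i}\in A_j}\Pi_{\bf i}Q_{\bf i}+\sum_{{\bf i}\in A_k}\Pi_{\bf i}R^{(0)}_{\bf i}$ versus $R=\sum_{j\geq 0}\sum_{{\bf i}\in A_j}\Pi_{\bf i}Q_{\bf i}$, applies Minkowski, and then cites a moment bound (Lemma~4.4 of \cite{Jel_Olv_12b}) giving $E[W_j^p]\leq K_p(\rho_1\vee\rho_p)^j$ for the generation-$j$ tail sums; the geometric rate $c_p=\rho_1\vee\rho_p$ comes directly from that lemma, not from any contraction of the one-step map. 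You would need to replace your centering argument with this (or an equivalent) explicit tail estimate on the weighted branching process to make case (ii) work.
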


As the proof of Theorem~\ref{T.LpConvergence} shows, one can take $c_p = H_p$ under the main set of conditions as well as under conditions (i), whereas for (ii) we have $c_p = \rho_1 \vee \rho_p$.  As a consequence of the proof of Theorem~\ref{T.LpConvergence} we also obtain the following explicit bound for the moments of $R^{(k)}$.

\begin{lemma} \label{L.Moments}
Suppose Assumption~\ref{A.PhiAssumption} holds for some $p \geq 1$. In the linear case, if only Assumption~\ref{A.PhiAssumption} {\sc [linear0]} holds, suppose further that $E[ R^{(0)}] = E[Q] = 0$. Then, for any $k \geq 0$,
$$\left( E[ |R^{(k)}|^p ] \right)^{1/p} \leq A_p \sum_{i=0}^{k-1} (H_p^{1/p})^i ,$$
where $A_p = (H_p^{1/p}+1) \left( E[ |R^{(0)}|^p] \right)^{1/p} + \left(  E\left[  \left|  \Phi(Q, N, \{C_r\}, \{ 0 \})  \right|^p \right] \right)^{1/p} $. 
\end{lemma}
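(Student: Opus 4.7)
The plan is to bound $\|R^{(k)}\|_p$ recursively in terms of $\|R^{(k-1)}\|_p$ using Assumption~\ref{A.PhiAssumption}, and then iterate. Write $h = H_p^{1/p}$, $b = \left(E\bigl[|\Phi(Q,N,\{C_r\},\{0\})|^p\bigr]\right)^{1/p}$, and $r_0 = \left(E[|R^{(0)}|^p]\right)^{1/p}$, so that the target is $\|R^{(k)}\|_p \le ((h+1)r_0 + b)\sum_{i=0}^{k-1}h^i$.

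The first key step is to apply Assumption~\ref{A.PhiAssumption} with $X_i$ taken to be i.i.d.\ copies of $R^{(k-1)}$ (independent of the branching vector, which is how $R^{(k)}$ is constructed) and $Y_i \equiv 0$. This yields
\begin{equation*}
E\bigl[|R^{(k)} - \Phi(Q, N, \{C_r\}, \{0\})|^p\bigr] \le H_p\, E\bigl[|R^{(k-1)}|^p\bigr].
\end{equation*}
Minkowski's inequality in $L^p$ then gives the one-step recursion
\begin{equation*}
\|R^{(k)}\|_p \le \|R^{(k)} - \Phi(Q,N,\{C_r\},\{0\})\|_p + \|\Phi(Q,N,\{C_r\},\{0\})\|_p \le h\,\|R^{(k-1)}\|_p + b.
\end{equation*}

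Unrolling this inequality $k$ times produces $\|R^{(k)}\|_p \le h^k r_0 + b \sum_{i=0}^{k-1} h^i$. To match the stated form, I would observe the elementary identity $(h+1)\sum_{i=0}^{k-1}h^i = h^k + 1 + 2\sum_{i=1}^{k-1}h^i \ge h^k$ for every $k\ge 1$ and $h\ge 0$, which upgrades $h^k r_0$ to $(h+1)r_0 \sum_{i=0}^{k-1}h^i$ and gives
\begin{equation*}
\|R^{(k)}\|_p \le \bigl((h+1)r_0 + b\bigr)\sum_{i=0}^{k-1}h^i = A_p \sum_{i=0}^{k-1} h^i,
\end{equation*}
as required (the case $k=0$ is trivial).

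The only subtlety I anticipate concerns the linear SFPE under Assumption~\ref{A.PhiAssumption}~[linear0], where the contraction bound is only available when the two input sequences have matching means. Here, taking $Y_i \equiv 0$ requires $E[R^{(k-1)}] = 0$. I would handle this by an auxiliary induction: assuming $E[Q] = E[R^{(0)}] = 0$, then conditioning on $(N, \{C_r\})$ in \eqref{eq:Linear} and using the independence of the $\{R_i^{(k-1)}\}$ from the branching vector together with the inductive hypothesis $E[R^{(k-1)}]=0$ yields $E[R^{(k)}] = E[Q] + E\bigl[\sum_{i=1}^N C_i\bigr] \cdot E[R^{(k-1)}] = 0$. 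Once the mean-zero property is propagated, the rest of the argument is identical. This mean-preservation induction is the only place where the proof differs across the three sets of hypotheses, so I expect it to be the main (very mild) obstacle.
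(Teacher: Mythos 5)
Your proof is correct, and it is a genuinely different decomposition from the paper's. The paper bounds the Wasserstein distance $d_p(F_k, F_0)$ by telescoping: it uses the contraction estimate $d_p(F_j, F_{j-1}) \le (H_p^{1/p})^{j-1}\, d_p(F_1,F_0)$ established in the proof of Theorem~\ref{T.LpConvergence}, sums the increments, and then converts back to moments via the optimal Wasserstein coupling, $\|R^{(k)}\|_p \le d_p(F_k,F_0) + \|R^{(0)}\|_p$. You instead apply the assumption once per step with $\{Y_r\}\equiv 0$ to get a direct one-step $L^p$ recursion $\|R^{(k)}\|_p \le H_p^{1/p}\|R^{(k-1)}\|_p + \|\Phi(Q,N,\{C_r\},\{0\})\|_p$ and unroll it. Your route has two advantages: it is self-contained (it does not reuse \eqref{eq:Contraction}--\eqref{eq:FirstStepMoment} from a separate proof), and the intermediate inequality $\|R^{(k)}\|_p \le (H_p^{1/p})^k\|R^{(0)}\|_p + \|\Phi(Q,N,\{C_r\},\{0\})\|_p \sum_{i=0}^{k-1}(H_p^{1/p})^i$ is strictly tighter than the stated bound, which you recover only after deliberately weakening it. Your handling of the {\sc [linear0]} case by an auxiliary mean-zero induction $E[R^{(k)}]=0$ is exactly what the paper relies on implicitly (it asserts this fact without detail in the proof of Theorem~\ref{T.LpConvergence}), so that step is fine.
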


Before stating the main theorems establishing the convergence of the algorithm in the Wasserstein metric, we point out how Assumptions~\ref{A.PhiAssumption} and \ref{A.Lipschitz} are satisfied by all the examples mentioned in the introduction.

\begin{examples} \label{E.Examples}
\begin{itemize}
\item The linear SFPE \eqref{eq:Linear} clearly satisfies Assumption~\ref{A.Lipschitz} with $\varphi(t) = |t|$. Moreover, for the Quicksort algorithm studied in \cite{Rosler_91,Fill_Jan_01,Ros_Rus_01} we have $N \equiv 2$, $C_1 = U = 1- C_2$ and $Q =  2U\ln U + 2(1-U) \ln(1-U) + 1$, with $U$ uniformly distributed on $[0,1]$ and $E[Q] = 0$, in which case we can take any $p \in \mathbb{N}_+$ and $H_p = 1 - 2p E[ U^{p-1} (1-U)] = (p-1)/(p+1) < 1$ in Assumption~\ref{A.PhiAssumption} {\sc [linear0]}. Lemma~\ref{L.Moments} also gives that $E[|R^{(k)}|^p]$ is uniformly bounded in $k$ for all $p \geq 1$. 

For the PageRank algorithm studied in  \cite{Volk_Litv_08,Jel_Olv_10,Chen_Litv_Olv_14} we have $\{ C_i \}_{1 \leq i \leq N}$ i.i.d.~and independent of $N$, $|C_i| \leq c < 1$ a.s., and $E[ |C_1|^p ] \leq c^p/E[N]$ for any $p \geq 1$. Hence, we can take $p = 1$ and $H_1 = E[N] E[|C_1|] \leq c < 1$ in Assumption~\ref{A.PhiAssumption}. Furthermore, Theorem~\ref{T.LpConvergence}(ii) gives that $E[ |R^{(k)} - R|^q ] = O( \gamma^k)$ for some $0 < \gamma < 1$ provided $E[ |Q|^q + N^q] < \infty$, which in turn gives the uniform boundedness of $E[|R^{(k)}|^q]$. 

\item Using the inequality 
\begin{equation} \label{eq:MaxLipschitz}
\left| \max_{1\leq i \leq n} \{x_i\} - \max_{1 \leq i \leq n} \{ y_i \} \right| \leq \max_{1 \leq i \leq n} |x_i - y_i| \leq \sum_{i=1}^n |x_i - y_i|
\end{equation} 
for any real numbers $\{x_i, y_i\}$ and any $n \geq 1$, we obtain that the maximum SFPE \eqref{eq:Max} satisfies Assumption~\ref{A.Lipschitz} with $\varphi(t) = |t|$ as well. Furthermore, in the analysis of queueing networks with parallel servers and synchronization requirements from \cite{Kar_Kel_Suh_94, Olv_Ruiz_14}, where $T \equiv 0$ (equivalently, $Q \equiv 1$), the stability condition of the system implies that $H_p < 1$ for any $p \geq 1$ whenever the system is stable. Lemma~\ref{L.Moments} then implies that $E[|R^{(k)}|^p]$ is uniformly bounded in $k$ for all $p \geq 1$.

\item In the case of the discounted  tree sum SFPE \eqref{eq:DiscountedTrees}, inequality \eqref{eq:MaxLipschitz} implies that we can also take $\varphi(t) = |t|$ in Assumption~\ref{A.Lipschitz}.  For the analysis of the {\tt FIND} algorithm in \cite{Devroye_01} in particular, we have $N \equiv 2$, $C_1 = U = 1 - C_2$ and $Q \equiv 1$, with $U$ uniformly distributed on $[0,1]$, and we can take  $H_p = 2 E[U^p] =2/(p+1) < 1$ for any $p > 1$ in Assumption~\ref{A.PhiAssumption}. Lemma~\ref{L.Moments} then gives that $E[|R^{(k)}|^p]$ is uniformly bounded in $k$ for all $p > 1$

\item To see that \eqref{eq:IsingModel} also satisfies Assumption~\ref{A.Lipschitz} with $\varphi(t) = |t|$ (in this case $C_i \equiv \tanh(\beta)$ for all $i\geq 1$), let $c = \tanh(\beta) \in [0, 1)$ (since $\beta \geq 0$) and note that the function 
\begin{align*}
f(x) &= \arctanh (c \tanh(x)) = \frac{1}{2} \ln \left( \frac{1+c (e^{2x}-1)/(e^{2x}+1) }{1-c (e^{2x}-1)/(e^{2x}+1)} \right) \\
&= \frac{1}{2} \ln \left( \frac{e^{2x}(1+c) + 1-c }{e^{2x}(1-c)  + 1+c } \right)
\end{align*}
has derivative
$$f'(x)  = \frac{4c}{2(1 + c^2) + (e^{2x} + e^{-2x}) (1 - c^2)  } = \frac{2c}{1+c^2 + \cosh(2x) (1-c^2)},$$
and therefore satisfies 
$$|f(x) - f(y)| = | f'(\xi) | |x-y| \leq c | x - y| , \qquad \text{for some $\xi$ between $x$ and $y$}.$$
Assumption~\ref{A.PhiAssumption} is then satisfied for $p = 1$ and $H_1 \leq E[N] \tanh(\beta)$, with $H_p < 1$ at high temperatures ($\beta < 1/E[N]$). Moreover,  since $|f(x)| \leq c|x|$, $R^{(k)}$ in the  ``free entropy" SFPE \eqref{eq:IsingModel} is smaller or equal than $\tilde R^{(k)}$, where 
$$\tilde R^{(k)} = |Q| + \sum_{i=1}^N \tanh(\beta) \tilde R_i^{(k-1)}.$$
Hence, provided $\beta < 1/E[N]$, Theorem~\ref{T.LpConvergence}(ii) gives that for any $p \geq 1$ for which $E[|Q|^p + N^p] < \infty$, $E[|R^{(k)}|^p]$ is uniformly bounded in $k$.
\end{itemize}
\end{examples}

Our first result establishes the convergence in mean of $d_p(\hat F_{k,m}, F_k)$ under the ``optimal" moment conditions, that is, assuming only that  $\max_{0 \leq j \leq k} E[|R^{(j)}|^p] < \infty$. In view of Remark~\ref{R.Assumptions}(ii), this is implied in all our examples by $E\left[ \left( \sum_{i=1}^N \varphi(C_i) \right)^p \right] < \infty$.  This result was previously proven in  \cite{Chen_Olv_15} for the linear SFPE \eqref{eq:Linear} for $p = 1$.

\begin{thm}\label{T.MeanConvergence}
Fix $1 \leq p < \infty$ and suppose that $\Phi$ satisfies Assumption~\ref{A.PhiAssumption} ,or Assumption~\ref{A.PhiAssumption} {\sc [linear0]}, for $p$. Assume further that for any fixed $k \in \mathbb{N}$,  $\max_{0 \leq j \leq k} E[|R^{(j)}|^p] < \infty$. Let $\{ R^{(j)}_1, \dots, R^{(j)}_m\}$ be an i.i.d.~sample from distribution $F_j$, and let $F_{j,m}$ denote their corresponding empirical distribution function.   Then, 
\begin{equation*}
E\left[ d_p(\hat F_{k,m}, F_{k})^p \right]  \leq \left( \sum_{r=0}^k (H_p^{1/p})^{r} \right)^{p-1} \sum_{j=0}^k (H_p^{1/p})^{k-j} E\left[  d_p(F_{j,m}, F_{j})^p  \right],
\end{equation*}
where $0 < H_p < \infty$ is the same from Assumption \ref{A.PhiAssumption}. Moreover, if $\max_{0 \leq j \leq k} E[ |R^{(j)}|^q] < \infty$ for $q > p \geq 1$, $q \neq 2p$, then
\begin{align*}
E\left[ d_p(\hat F_{k,m}, F_{k})^p \right] & \leq  K \left( \sum_{r=0}^k (H_p^{1/p})^{r} \right)^{p-1}  \sum_{j=0}^k (H_p^{1/p})^{k-j} (E[|R^{(j)}|^q])^{p/q} \cdot m^{-\min\{(q-p)/q, \,1/2\}},
\end{align*}
where $K = K(p,q)$ is a constant that only depends on $p$ and $q$. 
\end{thm}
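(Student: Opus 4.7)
The plan is to couple the bootstrap pool $\mathscr{P}^{(k,m)}$ with an auxiliary i.i.d.~oracle sample from $F_k$, derive a one-step contractive recursion for the $L^p$-norm of the Wasserstein distance to the true distribution, and then iterate. At level $k \geq 1$, I reuse the branching vectors $\{(Q_i^{(k)}, N_i^{(k)}, \{C_{(i,r)}^{(k)}\})\}_{i=1}^m$ generated by the algorithm, and \emph{conditionally on} the pool $\mathscr{P}^{(k-1,m)}$ draw the pairs $(\hat R_{(i,r)}^{(k-1,m)}, R_{(i,r)}^{(k-1),*})$ as i.i.d.~samples (across $(i,r)$) from the optimal $d_p$-coupling of $\hat F_{k-1,m}$ and $F_{k-1}$, independent of the level-$k$ branching vectors. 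Setting $R_i^{(k),*} = \Phi(Q_i^{(k)}, N_i^{(k)}, \{C_{(i,r)}^{(k)}\}, \{R_{(i,r)}^{(k-1),*}\})$, each $R_i^{(k),*}$ has marginal $F_k$, and since the conditional law of $R_i^{(k),*}$ given the pool does not depend on the pool, $\{R_i^{(k),*}\}_{i=1}^m$ is i.i.d.~$F_k$; its empirical distribution $F_{k,m}^*$ has the same law as $F_{k,m}$ in the statement.

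Conditional on $\mathscr{P}^{(k-1,m)}$, for each fixed $i$ the pairs $(\hat R_{(i,r)}^{(k-1,m)}, R_{(i,r)}^{(k-1),*})$ are i.i.d.~in $r$ and independent of the branching vector $(Q_i^{(k)}, N_i^{(k)}, \{C_{(i,r)}^{(k)}\})$, so Assumption~\ref{A.PhiAssumption} yields
\[
E\bigl[|\hat R_i^{(k,m)} - R_i^{(k),*}|^p \bigm| \mathscr{P}^{(k-1,m)}\bigr] \leq H_p\, E\bigl[|\hat R_{(i,1)}^{(k-1,m)} - R_{(i,1)}^{(k-1),*}|^p \bigm| \mathscr{P}^{(k-1,m)}\bigr] = H_p\, d_p(\hat F_{k-1,m}, F_{k-1})^p,
\]
the last equality using optimality of the conditional coupling. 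The trivial index-matched coupling of the two empirical distributions gives $d_p(\hat F_{k,m}, F_{k,m}^*)^p \leq m^{-1}\sum_{i=1}^m |\hat R_i^{(k,m)} - R_i^{(k),*}|^p$, so after taking unconditional expectations, $E[d_p(\hat F_{k,m}, F_{k,m}^*)^p] \leq H_p\, E[d_p(\hat F_{k-1,m}, F_{k-1})^p]$. Combining with the triangle inequality for $d_p$ and Minkowski's inequality in $L^p$, the sequences $a_k := (E[d_p(\hat F_{k,m}, F_k)^p])^{1/p}$ and $b_k := (E[d_p(F_{k,m}, F_k)^p])^{1/p}$ satisfy $a_k \leq H_p^{1/p} a_{k-1} + b_k$, with $a_0 = b_0$ because $\mathscr{P}^{(0,m)}$ is by construction an i.i.d.~$\mu_0$ sample.

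Unrolling gives $a_k \leq \sum_{j=0}^k (H_p^{1/p})^{k-j} b_j$, and Jensen's inequality applied to $t \mapsto t^p$ with weights $w_j = (H_p^{1/p})^{k-j}$ produces
\[
a_k^p \leq \Bigl(\sum_{j=0}^k w_j\Bigr)^{p-1} \sum_{j=0}^k w_j b_j^p = \Bigl(\sum_{r=0}^k (H_p^{1/p})^r\Bigr)^{p-1} \sum_{j=0}^k (H_p^{1/p})^{k-j} E[d_p(F_{j,m}, F_j)^p],
\]
which is the first claimed bound. The second bound then follows by substituting the classical one-dimensional empirical Wasserstein rate $E[d_p(F_{j,m}, F_j)^p] \leq K(p,q)(E[|R^{(j)}|^q])^{p/q} m^{-\min\{(q-p)/q,\,1/2\}}$, valid for $q > p$ with $q \neq 2p$, the excluded value being where a logarithmic correction is known to appear. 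The main obstacle is the independence bookkeeping required to invoke Assumption~\ref{A.PhiAssumption}: the coupled children must be i.i.d.~across $(i,r)$ and independent of the level-$k$ branching vectors, forcing a fresh coupling at each level rather than a single joint construction across the recursion. A secondary subtlety arises under {\sc [linear0]}, where the inequality applies only to mean-matched $X, Y$ while $\hat F_{k-1,m}$ has random empirical mean $\bar R^{(k-1,m)} \neq E[R^{(k-1)}]$; this is handled by coupling the recentered pair $(\hat R_{(i,r)}^{(k-1,m)} - \bar R^{(k-1,m)}, R_{(i,r)}^{(k-1),*} - E[R^{(k-1)}])$, applying [linear0] to the resulting difference, and using linearity of $\Phi$ to recover $\hat R_i^{(k,m)} - R_i^{(k),*}$ up to an affine shift whose $L^p$ norm is controlled by standard empirical-mean bounds.
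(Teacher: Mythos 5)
Your main-line argument tracks the paper's proof very closely. The coupling you describe abstractly — drawing $(\hat R_{(i,r)}^{(k-1,m)}, R_{(i,r)}^{(k-1),*})$ from the optimal $d_p$-coupling of $\hat F_{k-1,m}$ and $F_{k-1}$, conditionally on the pool and independently across $(i,r)$ and of the level-$k$ branching vectors — is precisely what the paper implements concretely via a common array of uniforms and the quantile transform, since on $\mathbb{R}$ the optimal coupling is $(\hat F_{k-1,m}^{-1}(U), F_{k-1}^{-1}(U))$ with $U$ uniform. Your observation that the conditional law of $R_i^{(k),*}$ given the pool does not depend on the pool, so the oracle sample is genuinely i.i.d.~$F_k$, is the key point and the paper makes the same observation. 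The index-matched empirical coupling bound, the triangle/Minkowski recursion $a_k \leq H_p^{1/p}a_{k-1} + b_k$, the Jensen step to pass from the $L^p$-of-$d_p$ recursion to a bound on $E[d_p^p]$, and the substitution of the Fournier--Guillin rate for $E[d_p(F_{j,m},F_j)^p]$ are all identical to the paper's.

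Where you diverge is the {\sc [linear0]} case, and there your proposed fix does not deliver the stated bound. You correctly flag that, conditionally on $\mathcal{F}_{k-1}$, the coupled differences $\hat F_{k-1,m}^{-1}(U) - F_{k-1}^{-1}(U)$ have nonzero mean (equal to the centered empirical mean of the pool). But your remedy — recentering each coordinate of the coupling and then ``recovering $\hat R_i^{(k,m)} - R_i^{(k),*}$ up to an affine shift'' — necessarily introduces an additive error term of the form $\bigl(\sum_r C_r\bigr)\bigl(\bar R^{(k-1,m)} - E[R^{(k-1)}]\bigr)$, and any bound on its $L^p$ norm would appear as an extra summand in the final inequality, which the theorem's clean recursive bound does not contain. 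The paper avoids this entirely: it verifies by a one-line induction through the linear recursion that the \emph{unconditional} expectation $E[\hat F_{j-1,m}^{-1}(U) - F_{j-1}^{-1}(U)]$ vanishes (because the algorithm preserves means exactly, starting from an exact i.i.d.~initialization), and then applies {\sc [linear0]} at the level of the unconditional expectation $E\bigl[\bigl|\sum_r C_{(1,r)}^{(j)}\bigl(\hat F_{j-1,m}^{-1}(U_{(1,r)}^{(j)}) - F_{j-1}^{-1}(U_{(1,r)}^{(j)})\bigr)\bigr|^p\bigr]$, with no recentering and hence no extraneous term. So your proof establishes the general case but, as written, would only give a weaker inequality (with an additional empirical-mean error) under {\sc [linear0]}; you should replace the recentering device with the paper's direct unconditional-mean argument.
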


\begin{remarks}
\begin{enumerate} \renewcommand{\labelenumi}{(\roman{enumi})}
\item Note that Assumption \ref{A.PhiAssumption} does not require that $H_p < 1$, i.e., it is not necessary for $\Phi$ to define a contraction for the algorithm to work. However, when $H_p < 1$ the bound provided by Theorem~\ref{T.MeanConvergence} becomes independent of $k$, ensuring that the complexity of the population dynamics algorithm remains linear in $k$, rather than exponential, i.e., $(E[N])^k$, as the naive algorithm. When $H_p \geq 1$ for all $p \geq 1$ the bound given above may grow with the level of the recursion, i.e., the value of $k$, and the convergence of the sequence $\{\mu_k\}$ as $k \to \infty$ may not be guaranteed. 

\item Even in the case when $H_p \geq 1$ for all $p \geq 1$, the explicit bounds provided by Theorem~\ref{T.MeanConvergence} may be useful for determining whether endogenous solutions exist, since they guarantee that we can accurately approximate $R^{(k)}$. 

\item We also point out that the first inequality in Theorem~\ref{T.MeanConvergence} implies that the rate at which $E\left[ d_p(\hat F_{k,m}, F_{k})^p \right]$ converges to zero is determined by $\max_{0 \leq j \leq k} E[ d_p(F_{j,m}, F_j)]$. Since $d_p( F_{j,m}, F_j)$ corresponds to implementing the population dynamics algorithm by sampling without replacement from a ``perfect" i.i.d.~pool of observations from $\mu_{j-1}$, this convergence rate is in some sense optimal.

\item For all the examples given in Examples~\ref{E.Examples}, we have $H_p < 1$ and $\sup_{k \geq 0} E[|R^{(k)}|^q] < \infty$ for some $q > p$, making the bound provided by Theorem~\ref{T.MeanConvergence} independent of $k$. Moreover, for the Quicksort and {\tt FIND} algorithms, as well as for the queuing networks with parallel servers and synchronization requirements, the best possible rate of convergence is achieved, i.e., $E[d_p(\hat F_{k,m}, F_k)^p] = O( m^{-1/2})$ uniformly in $k$.
\end{enumerate}
\end{remarks}

We now turn our attention to the almost sure convergence of $d_p(\hat F_{k,m}, F_k)$, for which we provide two different results. The first one holds under Assumption~\ref{A.PhiAssumption} as above, but under rather strong moment conditions. Note that for the linear case Assumption~\ref{A.PhiAssumption}, in its general form, holds for any $p \geq 1$ for which $E\left[ \left( \sum_{i=1}^N |C_i| \right)^p \right] < \infty$ by Remark~\ref{R.Assumptions}(i).  Allowing Assumption~\ref{A.PhiAssumption} to hold for only $E[ X_i - Y_i] = 0$ is important for guaranteeing that we can choose $H_p < 1$ in Theorem~\ref{T.MeanConvergence}, but is unimportant for the almost sure convergence of the algorithm.

\begin{thm} \label{T.AlmostSure1}
Fix $1 \leq p < \infty$ and suppose that $\Phi$ satisfies Assumption~\ref{A.PhiAssumption} for both $p$ and $2p$. Assume further that for any fixed $k \in \mathbb{N}$, $\max _{0 \leq j \leq k} E[ (R^{(j)})^{2p} (\log |R^{(j)}|)^+] < \infty$. Then,
$$\lim_{m \to \infty} d_p(\hat F_{k,m}, F_k) = 0 \quad \text{a.s.}$$
\end{thm}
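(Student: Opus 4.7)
The plan is to induct on $k$, combining an explicit coupling with a second-moment concentration bound. For the base case $k=0$, $\hat F_{0,m}$ is the empirical distribution of an i.i.d.\ sample from $\mu_0$, and the moment hypothesis implies $E[|R^{(0)}|^p]<\infty$, so Varadarajan's theorem gives $d_p(\hat F_{0,m},F_0)\to 0$ a.s.

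For the inductive step, assume $d_p(\hat F_{j,m},F_j)\to 0$ a.s.\ for $j<k$. I would couple each bootstrap draw from the pool to a ``clean'' draw from the true law: conditionally on $\mathscr{P}^{(k-1,m)}$, write $\hat R_{(i,r)}^{(k-1,m)}=\hat F_{k-1,m}^{-1}(U_{(i,r)})$ for i.i.d.\ uniforms $U_{(i,r)}$ on $[0,1]$, and set $\tilde R_{(i,r)}^{(k-1,m)}=F_{k-1}^{-1}(U_{(i,r)})$, so that $\tilde R_{(i,r)}^{(k-1,m)}\sim F_{k-1}$. Defining
$$\tilde R_i^{(k,m)}=\Phi\bigl(Q_i^{(k)},N_i^{(k)},\{C_{(i,r)}^{(k)}\},\{\tilde R_{(i,r)}^{(k-1,m)}\}\bigr),\qquad i=1,\ldots,m,$$
one checks that the $\tilde R_i^{(k,m)}$ are i.i.d.\ from $F_k$. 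Let $\tilde F_{k,m}$ be their empirical distribution. The triangle inequality together with the fact that indexwise pairing is a valid (though non-optimal) coupling yields
$$d_p(\hat F_{k,m},F_k)\leq \Bigl(\tfrac{1}{m}\sum_{i=1}^m\bigl|\hat R_i^{(k,m)}-\tilde R_i^{(k,m)}\bigr|^p\Bigr)^{1/p}+d_p(\tilde F_{k,m},F_k),$$
and the second term vanishes a.s.\ by Varadarajan applied to an i.i.d.\ sample from $F_k$.

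The heart of the proof is to show $S_m:=\tfrac{1}{m}\sum_i |\hat R_i^{(k,m)}-\tilde R_i^{(k,m)}|^p\to 0$ a.s. Because the monotone coupling is optimal in $d_q$ for every $q\geq 1$, Assumption~\ref{A.PhiAssumption} applied at exponents $p$ and $2p$ gives, conditionally on $\mathscr{P}^{(k-1,m)}$,
\begin{align*}
E\bigl[S_m\bigm|\mathscr{P}^{(k-1,m)}\bigr]&\leq H_p\, d_p(\hat F_{k-1,m},F_{k-1})^p,\\
\mathrm{Var}\bigl(S_m\bigm|\mathscr{P}^{(k-1,m)}\bigr)&\leq \frac{H_{2p}}{m}\, d_{2p}(\hat F_{k-1,m},F_{k-1})^{2p}.
\end{align*}
The conditional mean tends to $0$ a.s.\ by the inductive hypothesis, and conditional Chebyshev followed by taking expectations yields
$$P\bigl(|S_m-E[S_m\mid\mathscr{P}^{(k-1,m)}]|>\epsilon\bigr)\leq \frac{H_{2p}}{\epsilon^2}\cdot \frac{E\bigl[d_{2p}(\hat F_{k-1,m},F_{k-1})^{2p}\bigr]}{m}.$$

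To close via Borel--Cantelli, I would apply Theorem~\ref{T.MeanConvergence} with $2p$ in place of $p$; this is legitimate because Assumption~\ref{A.PhiAssumption} is assumed at $2p$ and the log-moment hypothesis guarantees $E[|R^{(j)}|^{2p}]<\infty$. The resulting bound reduces the decay of $E[d_{2p}(\hat F_{k-1,m},F_{k-1})^{2p}]$ to that of $E[d_{2p}(F_{j,m},F_j)^{2p}]$ for i.i.d.\ empiricals. The log-moment condition $E[(R^{(j)})^{2p}(\log|R^{(j)}|)^+]<\infty$ is precisely the classical del Barrio--Gin\'e--Matr\'an-type sharp condition ensuring $E[d_{2p}(F_{j,m},F_j)^{2p}]=O(m^{-1/2})$ in one dimension, which makes the right-hand side above $O(m^{-3/2})$ and hence summable. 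Borel--Cantelli then delivers $S_m\to 0$ a.s., completing the induction. The main technical obstacle is this final step: extracting a genuinely polynomial rate on the empirical Wasserstein distance from the log-moment hypothesis and threading it through Theorem~\ref{T.MeanConvergence}, since a rate of merely $o(1)$ (or $o(1/\log m)$) on $E[d_{2p}(F_{j,m},F_j)^{2p}]$ would make the Borel--Cantelli step fail and only yield a.s.\ convergence along subsequences.
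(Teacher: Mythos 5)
Your overall architecture matches the paper's: couple each bootstrap draw to a ``clean'' draw via the same uniforms (your $\tilde R^{(k,m)}_i$ is the paper's $R^{(k)}_i$), split by the triangle inequality through the empirical law of the clean sample, bound $d_p(\hat F_{k,m},F_{k,m})^p$ by the index-wise average $\frac1m\sum_i|\hat R^{(k,m)}_i-R^{(k)}_i|^p$, then control the centered sample average by conditional Chebyshev (using Assumption~\ref{A.PhiAssumption} at exponent $2p$ for the conditional variance) and Borel--Cantelli. The paper unrolls the recursion in $k$ rather than phrasing it as an induction, but that is cosmetic. The recursion step, the conditional mean bound $H_p\,d_p(\hat F_{j-1,m},F_{j-1})^p$, the conditional variance bound $H_{2p}\,d_{2p}(\hat F_{j-1,m},F_{j-1})^{2p}$, and the reduction to the i.i.d.\ empirical quantity via Theorem~\ref{T.MeanConvergence} applied at exponent $2p$ are all exactly as in the paper.

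The gap is precisely where you flag uncertainty: the assertion that the hypothesis $E[|R^{(j)}|^{2p}(\log|R^{(j)}|)^+]<\infty$ ``ensures $E[d_{2p}(F_{j,m},F_j)^{2p}]=O(m^{-1/2})$''. That is not true, and it is not the del Barrio--Gin\'e--Matr\'an condition. To get $E[d_{2p}(F_m,F)^{2p}]=O(m^{-1/2})$ by the one-dimensional bound $E[d_q(F_m,F)^q]\le q2^{q-1}\int|x|^{q-1}E[|F_m-F|]\,dx\le C m^{-1/2}\int|x|^{q-1}\sqrt{F(1-F)}\,dx$, one needs $\int|x|^{2p-1}\sqrt{F(1-F)}\,dx<\infty$, which for a tail $\bar F(x)\sim x^{-q_0}(\log x)^{-\alpha}$ requires essentially $q_0\ge 4p$. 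Likewise Fournier--Guillin's clean $O(m^{-1/2})$ bound for $E[d_{2p}^{2p}]$ needs a finite moment of order strictly above $4p$. The log-moment condition at $2p$ is far weaker than either, so the rate you cite does not follow from the hypotheses. Moreover, you also misjudge what is actually needed: the Borel--Cantelli step requires summability of $\sum_m m^{-1}E[d_{2p}(F_{j,m},F_j)^{2p}]$, not any polynomial rate on the summand; a bound like $O((\log m)^{-1-\epsilon})$ would already suffice. The paper's Lemma~\ref{L.MeanSeries} is the missing ingredient: it proves directly, via Proposition~7.14 of \cite{Bob_Led_17} plus a careful truncation argument using the functions $a(x)=\min\{1/\bar G(x),x^{q+\epsilon}\}$ and $b(x)$, that $\sum_{m\ge1} m^{-1}E[d_q(G_m,G)^q]<\infty$ whenever $E[|X|^q(\log|X|)^+]<\infty$, without extracting any rate on the individual terms. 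Replacing your final citation with Lemma~\ref{L.MeanSeries} (and Theorem~\ref{T.MeanConvergence} to transfer to the bootstrap pool) closes the argument.
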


The moment condition requiring the finiteness of the $2p$ absolute moment also appears in some related (stronger) results for the convergence of the Wasserstein distance between a distribution function and its empirical measure, specifically, concentration inequalities \cite{Fou_Gui_15} and a central limit theorem \cite{Barr_Gin_Mat_99}. In our case, where we seek only to establish the almost sure convergence of the algorithm, this condition is too strong, so we provide below an improved result under the finer Assumption~\ref{A.Lipschitz}.

\begin{thm} \label{T.AlmostSure2}
Fix $1 \leq p < \infty$ and suppose that $\Phi$ satisfies Assumption~\ref{A.Lipschitz}. Assume further that $E[ |R^{(0)}|^{p+\delta} + Z^{p+\delta}] < \infty$ for some $\delta > 0$, where $Z = \sum_{i=1}^N \varphi(C_i)$. Then, for any fixed $k \in \mathbb{N}$, 
$$\lim_{m \to \infty} d_p(\hat F_{k,m}, F_k) = 0 \qquad \text{a.s.}$$
\end{thm}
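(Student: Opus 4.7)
The plan is to proceed by induction on $k$, using the fact that $d_p$-convergence of probability measures on $\mathbb R$ (for $p \ge 1$) is equivalent to weak convergence together with convergence of the $p$-th absolute moment.

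Base case ($k=0$): the empirical measure $\hat F_{0,m}$ of i.i.d.\ samples from $\mu_0$ satisfies $\hat F_{0,m} \Rightarrow F_0$ a.s.\ by Varadarajan's theorem and $\tfrac{1}{m}\sum_{i=1}^m |R_i^{(0)}|^p \to E[|R^{(0)}|^p]$ a.s.\ by the strong law of large numbers (both valid under $E[|R^{(0)}|^{p+\delta}] < \infty$), giving $d_p(\hat F_{0,m}, F_0) \to 0$ a.s.

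Inductive step: assume $d_p(\hat F_{k-1,m}, F_{k-1}) \to 0$ a.s. Conditionally on the pool $\mathscr P^{(k-1,m)}$, the bootstrap samples $\hat R_i^{(k,m)}$ are i.i.d.\ from the push-forward $\nu_m := T(\hat F_{k-1,m})$, and Assumption~\ref{A.Lipschitz} together with Remark~\ref{R.Assumptions}(i) yields $d_p(\nu_m, F_k) \le (2E[Z^p])^{1/p}\, d_p(\hat F_{k-1,m}, F_{k-1}) \to 0$ a.s. I would then establish $d_p(\hat F_{k,m}, F_k) \to 0$ a.s.\ by verifying weak convergence and convergence of the $p$-th moment separately. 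For any bounded continuous $f$, Hoeffding's inequality applied conditionally on the pool gives $P(|\tfrac{1}{m}\sum_i f(\hat R_i^{(k,m)}) - \int f\,d\nu_m| > \epsilon \mid \mathscr P^{(k-1,m)}) \le 2\exp(-c m \epsilon^2)$, whose unconditional version is summable in $m$; Borel--Cantelli together with a countable separating family then yields $\hat F_{k,m} \Rightarrow F_k$ a.s.

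For the $p$-th moment convergence, I would truncate at a threshold $M$: the bounded part $\tfrac{1}{m}\sum_i (|\hat R_i^{(k,m)}|^p \wedge M^p)$ converges a.s.\ to $E[|R^{(k)}|^p \wedge M^p]$ by the same Hoeffding-plus-induction argument applied to $x \mapsto |x|^p \wedge M^p$, while the tail is bounded by $\tfrac{1}{m}\sum_i (|\hat R_i^{(k,m)}|^p - M^p)^+ \le M^{-\delta'} \cdot \tfrac{1}{m}\sum_i |\hat R_i^{(k,m)}|^{p+\delta'}$ for any $\delta' \in (0,\delta)$. The main obstacle is to show that this empirical $(p+\delta')$-moment is almost surely uniformly bounded in $m$, not merely in $L^1$ (which would give only convergence in probability). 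I plan to address this by iterating the same weak-convergence-plus-moment strategy at the higher exponent $p+\delta'$, with the induction closed at exponent $p+\delta$ where $E[Z^{p+\delta}] < \infty$ and Lemma~\ref{L.Moments} (via Assumption~\ref{A.PhiAssumption} implied by Remark~\ref{R.Assumptions}(i)) give the needed uniform $L^1$ control; the fluctuation between the conditional mean and the empirical moment is bounded via conditional Marcinkiewicz--Zygmund inequalities and Borel--Cantelli along a dyadic subsequence, with the margin $\delta - \delta' > 0$ providing the summability needed to close the argument.
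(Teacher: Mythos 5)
Your high-level architecture is the same as the paper's: induction with a decreasing family of exponents $p+\delta_j$, and the characterization of $d_p$-convergence as weak convergence plus convergence of $p$-th absolute moments. Your base case, the contraction estimate $d_p(\nu_m,F_k)\le(2E[Z^p])^{1/p}\,d_p(\hat F_{k-1,m},F_{k-1})$, and the conditional Hoeffding argument for bounded test functions are all correct --- in particular the Hoeffding constant depends only on $\|f\|_\infty$, so the $2e^{-cm\epsilon^2}$ tail is summable over all $m$ and Borel--Cantelli applies directly for both the weak-convergence part and the truncated moment $\tfrac1m\sum(|\hat R_i^{(k,m)}|^p\wedge M^p)$.

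The gap is in the tail control. You reduce the tail to showing $\limsup_m \tfrac1m\sum_i|\hat R_i^{(k,m)}|^{p+\delta'}<\infty$ a.s.\ and propose conditional Marcinkiewicz--Zygmund plus Borel--Cantelli along a dyadic subsequence. Two things go wrong. First, the moment budget forces the MZ order $r$ to satisfy $r(p+\delta')\le p+\delta_{j-1}$, hence $1<r<2$ after the first induction step, and $\sum_m m^{1-r}$ diverges --- so a union bound over all $m$ fails and the dyadic subsequence is genuinely needed. Second, and fatally, the dyadic interpolation step does not apply here: $\hat R_i^{(j,m)}$ depends on $m$ through $\hat F_{j-1,m}$, so $\sum_{i=1}^m|\hat R_i^{(j,m)}|^{p+\delta'}$ is \emph{not} a partial sum of a fixed sequence, and there is no monotonicity in $m$ allowing you to pass from the subsequence $m_\ell=2^\ell$ to all $m\in[m_\ell,m_{\ell+1})$. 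Without that interpolation the a.s.\ convergence along the subsequence says nothing about the full sequence.

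The paper sidesteps both problems by a different decomposition: it couples $\hat R_i^{(j,m)}$ with exact samples $R_i^{(j)}$ and bounds $d_p(\hat F_{j,m},F_{j,m})^p$ by the empirical $p$-th moment of the \emph{differences} $|\hat R_i^{(j,m)}-R_i^{(j)}|$ (which are small, of order $d_p(\hat F_{j-1,m},F_{j-1})$, not merely $O(1)$). The conditionally i.i.d.\ summands $Y_i^{(j,m)}$ are then truncated at $m/\log m$ and handled by Chernoff's bound, which yields a rate $m^{-3/2}$ summable over \emph{all} $m$ with no subsequence argument (Lemma~\ref{L.ConditionalSums}); the exceedance probability above the truncation is controlled uniformly in $m\ge n$ via Lemma~\ref{L.InverseBound} and a margin $r_j\in(q_{j+1},q_j)$ (Lemma~\ref{L.CleverBound}). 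To repair your plan you would need either to switch to this difference-based decomposition, or to find a maximal inequality valid for the $m$-dependent sums $\tfrac1m\sum_i|\hat R_i^{(j,m)}|^{p+\delta'}$; the MZ-plus-dyadic route does not close as written.
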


Our last result relates the convergence of $d_p(\hat F_{k,m}, F_k)$ to the consistency of estimators based on the pool $\mathscr{P}^{(k,m)}$. More precisely, the value of the algorithm lies in the fact that it efficiently produces a sample of identically distributed random variables whose distribution is approximately $F_k$. A natural estimator for quantities of the form $E[ h(R^{(k)})]$ is then given by
\begin{equation} \label{eq:hEstimator}
\frac{1}{m} \sum_{i=1}^m h (\hat R_i^{(k,m)}) = \int_{\mathbb{R}} h(x) d \hat F_{k,m}(x).
\end{equation}
However, the random variables in $\mathscr{P}^{(k,m)}$ are not independent of each other, and the consistency of such estimators requires proof. In the sequel, the symbol $\stackrel{P}{\to}$ denotes convergence in probability.

\begin{defn}
We say that $\Theta_n$ is a {\em weakly} consistent estimator for $\theta$ if $\Theta_n \stackrel{P}{\to} \theta$ as $n \to \infty$. We say that it is a {\em strongly} consistent estimator for $\theta$ if $\Theta_n \to \theta$ a.s. 
\end{defn}

Our last result shows the consistency of estimators of the form in \eqref{eq:hEstimator} for a broad class of functions.

\begin{prop} \label{P.Consistency}
Fix $1 \leq p < \infty$ and suppose that $h: \mathbb{R} \to \mathbb{R}$ satisfies $|h(x)| \leq C (1 + |x|^p)$ for all $x \in \mathbb{R}$ and some constant $C > 0$. Then, the following hold:
\begin{enumerate}
\item If $E[ d_p(\hat F_{k,m}, F_k)^p ] \to 0$ as $m \to \infty$, then \eqref{eq:hEstimator} is a weakly consistent estimator for $E[ h(R^{(k)})]$ for each fixed $k \in \mathbb{N}$. 

\item If $d_p(\hat F_{k,m}, F_k) \to 0$ a.s.,~as $m \to \infty$, then \eqref{eq:hEstimator} is a strongly consistent estimator for $E[ h(R^{(k)})]$ for each fixed $k \in \mathbb{N}$.
\end{enumerate}
\end{prop}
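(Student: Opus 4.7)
The plan is to reduce both parts of the proposition to the following deterministic lemma: if probability measures $\nu_n, \nu$ on $\mathbb{R}$ satisfy $d_p(\nu_n, \nu) \to 0$ and $h$ is continuous with $|h(x)| \le C(1+|x|^p)$, then $\int h\, d\nu_n \to \int h\, d\nu$. Continuity of $h$ is the natural implicit hypothesis here; what is really needed is continuity at $F_k$-a.e.\ point of $\mathbb{R}$, and I would flag this in the proof.

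To establish the lemma, I would use the explicit coupling from \eqref{eq:Explicit}. With $U \sim \mathrm{Uniform}(0,1)$, $X_n := F_{\nu_n}^{-1}(U)$, and $X := F_\nu^{-1}(U)$, one has $E[|X_n - X|^p] = d_p(\nu_n, \nu)^p \to 0$, so $X_n \to X$ in $L^p$ and in particular in probability. Continuity of $h$ then gives $h(X_n) \to h(X)$ in probability. Since $L^p$ convergence also implies $E[|X_n|^p] \to E[|X|^p] < \infty$, the family $\{|X_n|^p\}$ is uniformly integrable (a standard consequence of convergence of norms combined with a.s.\ convergence along subsequences, via a Scheff\'e-type argument). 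The growth bound on $h$ transfers this uniform integrability to $\{h(X_n)\}$, and combined with convergence in probability this gives $E[h(X_n)] \to E[h(X)]$, i.e., $\int h\, d\nu_n \to \int h\, d\nu$.

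Given the lemma, part (a) follows by a subsequence argument: Markov's inequality applied to $E[d_p(\hat F_{k,m}, F_k)^p] \to 0$ yields $d_p(\hat F_{k,m}, F_k) \to 0$ in probability. Every subsequence therefore admits a further subsequence along which $d_p \to 0$ almost surely, and applying the lemma pathwise on that full-measure event gives convergence of $\frac{1}{m}\sum_{i=1}^m h(\hat R_i^{(k,m)})$ to $E[h(R^{(k)})]$ a.s.\ along the sub-subsequence. Since the limit is the same for every such sub-subsequence, convergence in probability along the full sequence follows. Part (b) is then immediate: the lemma applies pathwise on the full-measure event on which $d_p(\hat F_{k,m}, F_k) \to 0$, yielding the claimed almost sure convergence.

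The main technical step is the uniform integrability argument, which is what converts $L^p$ closeness of the random variables $X_n$ and $X$ into control on $E[h(X_n)]$. This piece is standard but should be invoked carefully, as it rests on the interplay between convergence of $p$-th moments, convergence in probability, and a.s.\ convergence along subsequences; everything else is a clean assembly of the optimal Wasserstein coupling on $\mathbb{R}$, the continuity of $h$, and the usual trick for passing from a.s.\ subsequence limits to convergence in probability.
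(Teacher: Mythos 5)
Your proof is correct and follows the same overall architecture as the paper's: for part (a), the paper also passes from $L^p$ convergence to convergence in probability, extracts an a.s.\ convergent sub-subsequence of $d_p(\hat F_{k,m}, F_k)$ from every subsequence, applies the deterministic lemma pathwise, and concludes by the usual sub-subsequence characterization of convergence in probability; for part (b) the lemma is applied pathwise directly. The only substantive difference is the treatment of the key analytic fact. The paper simply cites Definition~6.8 and Theorem~6.9 of Villani (which state that $d_p(\nu_n,\nu)\to 0$ is equivalent to $\int \varphi\, d\nu_n \to \int \varphi\, d\nu$ for all continuous $\varphi$ with $|\varphi(x)| \le C(1+|x|^p)$), whereas you re-derive this from scratch via the optimal coupling $(X_n, X) = (F_{\nu_n}^{-1}(U), F_\nu^{-1}(U))$, the continuous mapping theorem, and a uniform-integrability argument. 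Your UI step is correct, and there is in fact an even shorter route: $|X_n|^p \le 2^{p-1}(|X_n-X|^p + |X|^p)$ is dominated by a sum of two UI families (one tending to $0$ in $L^1$, one fixed and integrable), hence is itself UI, avoiding the Scheff\'e detour. Finally, you are right to flag that $h$ must be continuous ($F_k$-a.e.\ would suffice): this hypothesis is missing from the proposition's statement, but it is required by Villani's theorem just as it is required by your continuous-mapping step, so the omission is in the statement, not in either proof.
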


We conclude that the population dynamics algorithm can be used to efficiently generate sample pools of random variables having a distribution that closely approximates that of the special endogenous solution to SFPEs of the form in \eqref{eq:SFPE}. Furthermore, these sample pools can be used to produce consistent estimators for a broad class of functions. The gain of efficiency of the algorithm compared to a naive Monte Carlo approach, combined with the consistency guarantees proved in this paper, make it extremely useful for the numerical analysis of many problems where SFPEs appear.

\section{Proofs} \label{S.Proofs}

This section includes the proofs of Theorems~\ref{T.MeanConvergence}, \ref{T.AlmostSure1}, \ref{T.AlmostSure2}, Proposition~\ref{P.Consistency}, Theorem~\ref{T.LpConvergence}, and of Lemma~\ref{L.Moments}, in that order. The last two appear at the end since they are not directly related to the Population Dynamics algorithm. The first four proofs are based on a construction of the pools $\{ \mathscr{P}^{(j,m)}: 0 \leq j \leq k\}$ where we carefully couple the random variables $\{ \hat R_i^{(j,m)} \}$ with i.i.d.~observations from their limiting distribution $F_j$. 


To start, for any $k \in \mathbb{N}$ let 
\begin{equation} \label{eq:AllRandomVectors}
\mathscr{E}_k = \left\{ \left(Q_i^{(j)}, N_i^{(j)}, \{ C_{(i,r)}^{(j)}\}_{r \geq 1}, \{ U_{(i,r)}^{(j)} \}_{r \geq 1} \right): i \geq 1, \, 0 \leq j \leq k \right\}
\end{equation}
be a collection of i.i.d.~random vectors where $\left(Q_i^{(j)}, N_i^{(j)}, \{ C_{(i,r)}^{(j)}\}_{r \geq 1} \right)$ has the same distribution as the generic branching vector $(Q, N, \{C_r\}_{i \geq 1})$ and the $\{U_{(i,r)}^{(j)}\}_{r \geq 1}$ are i.i.d.~random variables uniformly distributed in $[0,1]$, independent of $\left(Q_i^{(j)}, N_i^{(j)}, \{ C_{(i,r)}^{(j)}\}_{r \geq 1} \right)$. Next, we recursively construct a sequence of random variables $\{ (\hat R_i^{(j,m)}, R_i^{(j)}): 1 \leq i \leq m, \, 0 \leq j \leq k\}$ as follows:
\begin{enumerate} \renewcommand{\labelenumi}{\roman{enumi}.} 
\item Set $\hat R_i^{(0)} = F_0^{-1}(U_{(i,1)}^{(0)}) = R_i^{(0,m)}$, for $1 \leq i \leq m$; define 
$$\hat F_{0,m}(x) = \frac{1}{m} \sum_{i=1}^m 1(\hat R_i^{(0,m)} \leq x) = F_{0,m}(x).$$
\item For $1 \leq j \leq k$ and each $1 \leq i \leq m$,
 \begin{align*}
        \hat R_{i}^{(j,m)} &= \Phi\left( Q_i^{(j)}, N_i^{(j)}, \{ C_{(i,r)}^{(j)} \}_{r \geq 1}, \{ \hat F^{-1}_{j-1,m} (U_{(i,r)}^{(j)}) \}_{r \geq 1} \right) \qquad \text{and} \\
        R_{i}^{(j)} &= \Phi\left( Q_i^{(j)}, N_i^{(j)}, \{ C_{(i,r)}^{(j)} \}_{r \geq 1}, \{ F^{-1}_{j-1} (U_{(i,r)}^{(j)}) \}_{r \geq 1} \right);
    \end{align*}
define
$$\hat F_{j,m}(x) = \frac{1}{m} \sum_{i=1}^m 1(\hat R_i^{(j,m)} \leq x) \quad \text{and} \quad F_{j,m}(x) = \frac{1}{m} \sum_{i=1}^m 1(R_i^{(j)} \leq x).$$    
\end{enumerate}
Note that the random variables $\{ R_i^{(j)}\}_{i=1}^m$ are i.i.d.~and have distribution $F_j$, and therefore, $F_{j,m}$ is an empirical distribution function for $F_j$. The distribution functions $\hat F_{j,m}$ are those obtained through the population dynamics algorithm. 

Throughout the proofs we will also use repeatedly the sigma-algebra $\mathcal{F}_k = \sigma (\mathscr{E}_k)$ for $k \in \mathbb{N}$. We point out that all the random variables $\{ ( \hat R_i^{(k,m)}, R_i^{(k)}) : i \geq 1\}$ are measurable with respect to $\mathcal{F}_k$ for all $m \geq 1$.


We are now ready to prove Theorem~\ref{T.MeanConvergence}. 

\begin{proof}[Proof of Theorem~\ref{T.MeanConvergence}]
Let $\{ (\hat R_i^{(j,m)}, R_i^{(j)}): 1 \leq i \leq m, \, 0 \leq j \leq k\}$ be a sequence of random vectors constructed as explained above. 

Next, note that from the triangle inequality we obtain
\begin{equation} \label{eq:triangle}
d_p(\hat F_{j,m}, F_{j}) \leq d_p(\hat F_{j,m}, F_{j,m}) + d_p(F_{j,m}, F_{j}).
\end{equation}

Now let $\chi$ be a Uniform(0,1) random variable independent of everything else, and define the random variables 
\begin{align*}
\hat R^{(j,m)} &= \sum_{i=1}^m \hat R_i^{(j,m)} 1((i-1)/m < \chi \leq i/m) \qquad \text{and}  \\
\qquad R^{(j)} &= \sum_{i=1}^m R_i^{(j)} 1((i-1)/m < \chi \leq i/m),
\end{align*}
which conditionally on $\mathcal{F}_j$ are distributed according to $\hat F_{j,m}$ and $F_{j,m}$, respectively. Then, from the definition of $d_p$ we have
\begin{align}
d_p(\hat F_{j,m}, F_{j,m})^p &= \inf_{X\sim \hat F_{j,m}, Y\sim F_{j,m}} E\left[ \left. |X - Y|^p \right| \mathcal{F}_j \right] \notag \\
&\leq E\left[ \left. \left| \hat R^{(j,m)} - R^{(j)} \right|^p \right| \mathcal{F}_j \right] \notag \\
&= \frac{1}{m} \sum_{i=1}^m \left| \hat R_i^{(j,m)} - R_i^{(j)} \right|^p. \label{eq:ExplicitBound}
\end{align}
It follows from the observation that the random variables $X_i^{(j)} = \hat R_i^{(j,m)} - R_i^{(j)}$ are identically distributed, that
$$E\left[ d_p(\hat F_{j,m}, F_{j,m})^p \right] \leq E\left[ \left| \hat R_1^{(j,m)} - R_1^{(j)} \right|^p \right] .$$  

Next, suppose first that Assumption~\ref{A.PhiAssumption} for any $\{ X_i \}$ and $\{Y_i\}$, and note that
\begin{align*}
E\left[ \left| \hat R_1^{(j,m)} - R_1^{(j)} \right|^p \right] &= E\left[\left| \Phi\left( Q_1^{(j)}, N_1^{(j)}, \{ C_{(1,r)}^{(j)} \}_{r \geq 1}, \{ \hat F^{-1}_{j-1,m} (U_{(1,r)}^{(j)}) \}_{r \geq 1} \right) \right. \right. \\
&\hspace{10mm} \left. \left.  - \Phi\left( Q_1^{(j)}, N_1^{(j)}, \{ C_{(1,r)}^{(j)} \}_{r \geq 1}, \{ F^{-1}_{j-1} (U_{(1,r)}^{(j)}) \}_{r \geq 1} \right) \right|^p  \right] \\
&\leq  H_p E\left[ \left|\hat F^{-1}_{j-1,m} (U_{(1,1)}^{(j)}) - F^{-1}_{j-1} (U_{(1,1)}^{(j)}) \right|^p \right] \\
&= H_p  E\left[ d_p(\hat F_{j-1,m}, F_{j-1})^p \right] .
\end{align*}
For the linear case when only Assumption~\ref{A.PhiAssumption} {\sc [linear0]} holds, 
note that 
\begin{align*}
E[ \hat F_{j-1,m}^{-1}(U) - F_{j-1}^{-1}(U) ] &= E\left[ \sum_{i=1}^N C_i \right] E[  \hat R_1^{(j-2,m)} - R^{(j-2)} ]  \\
&= \left( E\left[ \sum_{i=1}^N C_i \right] \right)^{j-1} E[ \hat R_1^{(0,m)} - R^{(0)} ]  = 0, 
\end{align*}
and therefore, 
\begin{align*}
E\left[ \left| \hat R_1^{(j,m)} - R_1^{(j)} \right|^p \right] &= E\left[\left| \sum_{r=1}^{N_1^{(j)}} C_{(1,r)}^{(j)} \left( \hat F^{-1}_{j-1,m} (U_{(1,r)}^{(j)}) - F^{-1}_{j-1} (U_{(1,r)}^{(j)}) \right)  \right|^p  \right] \\
&\leq H_p E\left[ \left|\hat F^{-1}_{j-1,m} (U_{(1,1)}^{(j)}) - F^{-1}_{j-1} (U_{(1,1)}^{(j)}) \right|^p \right] \\
&= H_p  E\left[ d_p(\hat F_{j-1,m}, F_{j-1})^p \right] .
\end{align*}

It now follows from  \eqref{eq:triangle} and Minkowski's inequality,  that
\begin{align*}
\left( E\left[ d_p(\hat F_{j,m}, F_{j})^p \right] \right)^{1/p} &\leq \left( E\left[ \left( d_p(\hat F_{j,m}, F_{j,m}) + d_p(F_{j,m}, F_{j}) \right)^p \right] \right)^{1/p} \\
&\leq \left( E\left[ d_p(\hat F_{j,m}, F_{j,m})^p  \right]  \right)^{1/p} +  \left( E\left[  d_p(F_{j,m}, F_{j})^p  \right]  \right)^{1/p} \\
&\leq  \left( H_p E\left[ d_p(\hat F_{j-1,m}, F_{j-1})^p \right] \right)^{1/p} + \left( E\left[  d_p(F_{j,m}, F_{j})^p  \right]  \right)^{1/p}.
\end{align*}
 Iterating the recursion above we obtain
\begin{align*}
\left( E\left[ d_p(\hat F_{j,m}, F_{j})^p \right] \right)^{1/p} &\leq \sum_{r=1}^{j} (H_p^{1/p})^{j-r} \left( E\left[  d_p(F_{r,m}, F_{r})^p  \right]  \right)^{1/p}  + (H_p^{1/p})^j \left(E\left[ d_p(\hat F_{0,m}, F_{0})^p \right] \right)^{1/p} \\
&= \sum_{r=0}^{j} (H_p^{1/p})^{j-r} \left( E\left[  d_p(F_{r,m}, F_{r})^p  \right]  \right)^{1/p}. 
\end{align*}
Now let $\lambda_{j,r} = (H_p^{1/p})^{j-r} \left( \sum_{r=0}^j (H_p^{1/p})^{j-r} \right)^{-1}$ and use the fact that $g(x) = x^{1/p}$ is concave to obtain
\begin{align*}
\left( \sum_{r=0}^j (H_p^{1/p})^{j-r} \right)^{-1} \left( E\left[ d_p(\hat F_{j,m}, F_{j})^p \right] \right)^{1/p} &\leq \sum_{r=0}^j \lambda_{j,r} \left( E\left[  d_p(F_{r,m}, F_{r})^p  \right]  \right)^{1/p} \\
&\leq \left( \sum_{r=0}^j \lambda_{j,r} E\left[  d_p(F_{r,m}, F_{r})^p  \right] \right)^{1/p},
\end{align*}
or equivalently,
$$E\left[ d_p(\hat F_{j,m}, F_{j})^p \right]  \leq \left( \sum_{s=0}^j (H_p^{1/p})^{s} \right)^{p-1} \sum_{r=0}^j (H_p^{1/p})^{j-r} E\left[  d_p(F_{r,m}, F_{r})^p  \right].$$
This completes the first part of the proof.

Next, assume that $\max_{0 \leq r \leq k} E[ |R^{(r)}|^q] < \infty$ for $q > p \geq 1$, $q \neq 2p$, and use Theorem~1 in \cite{Fou_Gui_15} to obtain that
$$E\left[ d_p(F_{r,m}, F_r)^p \right] \leq C (E[|R^{(r)}|^q])^{p/q} \left( m^{-1/2} + m^{-(q-p)/q} \right),$$
where $C = C(p,q)$ is a constant that does not depend on $F_r$. The second statement of the theorem now follows. 
\end{proof}

\bigskip

We now turn to the proof of Theorem~\ref{T.AlmostSure1}. To simplify its exposition we first provide a preliminary result for the mean Wasserstein distance between a distribution and its empirical distribution function. 

\begin{lemma} \label{L.MeanSeries}
Let $G$ be a distribution on $\mathbb{R}$ and let $\{ X_i\}_{i \geq 1}$ be i.i.d.~random variables distributed according to $G$. Suppose $E[ |X_1|^{q} (\log |X_1|)^+] < \infty$ for some $q \geq 2$,  and let $G_m(x) = m^{-1} \sum_{i=1}^m 1(X_i \leq x)$ denote the empirical distribution function of the $\{X_i\}$. Then,
$$\sum_{m = 1}^\infty \frac{1}{m} E[ d_{q}(G_m, G)^{q}] < \infty.$$ 
\end{lemma}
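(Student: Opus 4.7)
The plan is to bound $E[d_q(G_m, G)^q]$ by a truncation argument, then apply Fournier--Guillin's Theorem~1 of \cite{Fou_Gui_15} (already invoked in the proof of Theorem~\ref{T.MeanConvergence}) to the truncated law, and finally sum over $m$ against $1/m$. Fix a truncation level $t_m > 0$, to be chosen below as $t_m = m^{1/(2q)}$, and set $\tilde X_i = X_i \cdot 1(|X_i| \leq t_m)$; denote by $\tilde G$ the common law of $\tilde X_i$ and by $\tilde G_m$ its empirical distribution function. The triangle inequality for $d_q$ together with $(a+b+c)^q \leq 3^{q-1}(a^q + b^q + c^q)$ gives
\begin{equation*}
E[d_q(G_m, G)^q] \leq 3^{q-1}\bigl(E[d_q(G_m, \tilde G_m)^q] + E[d_q(\tilde G_m, \tilde G)^q] + d_q(\tilde G, G)^q\bigr).
\end{equation*}

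For the two tail terms $E[d_q(G_m, \tilde G_m)^q]$ and $d_q(\tilde G, G)^q$, the natural coupling $X_i \leftrightarrow \tilde X_i$ (respectively $X_1 \leftrightarrow \tilde X_1$) yields the upper bound $E[|X_1|^q \cdot 1(|X_1| > t_m)]$ for each. Summing against $1/m$ and exchanging sum and expectation via Fubini,
\begin{equation*}
\sum_{m=1}^\infty \frac{1}{m} E[|X_1|^q \cdot 1(|X_1| > t_m)] = E\biggl[|X_1|^q \sum_{m : t_m < |X_1|} \frac{1}{m}\biggr] \leq C \, E[|X_1|^q (\log |X_1|)^+],
\end{equation*}
where the choice $t_m = m^{1/(2q)}$ identifies $\{m : t_m < y\}$ with $\{m : m < y^{2q}\}$, so that the inner harmonic sum is of order $\log(1 + y)$. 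The resulting expectation is finite by hypothesis.

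For the bulk term $E[d_q(\tilde G_m, \tilde G)^q]$, since $|\tilde X_i| \leq t_m$ we have $E|\tilde X_1|^r \leq t_m^{r-q} E|X_1|^q$ for any $r \geq q$. Applying Theorem~1 of \cite{Fou_Gui_15} to the truncated distribution with companion exponent $r = 2q + 1$ (chosen to avoid the excluded boundary $r = 2q$) gives a bound of order
\begin{equation*}
E[d_q(\tilde G_m, \tilde G)^q] \leq C_q \bigl(t_m^{q+1} E|X_1|^q\bigr)^{q/(2q+1)} \cdot m^{-1/2},
\end{equation*}
which with $t_m = m^{1/(2q)}$ reduces to $O\bigl(m^{-q/(2(2q+1))}\bigr)$; this decays at a fixed polynomial rate, so dividing by $m$ and summing is clearly convergent.

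The main obstacle will be the calibration of $t_m$: it must grow slowly enough for the Fournier--Guillin bulk bound to contribute a summable series when divided by $m$, yet fast enough that the tail-moment integral picks up only a logarithmic factor in $|X_1|$, precisely matching the hypothesis $E[|X_1|^q (\log |X_1|)^+] < \infty$. The choice $t_m = m^{1/(2q)}$ balances these two constraints and closes the argument.
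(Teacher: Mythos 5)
Your proof is correct, but it takes a genuinely different route from the paper's. The paper's argument goes through Proposition~7.14 of Bobkov--Ledoux, which writes $E[d_q(G_m,G)^q]$ as an integral of $|x|^{q-1}E|G_m(x)-G(x)|$ over the line, then splits the real line at $m$-dependent thresholds $a^{-1}(m)$, $b^{-1}(m)$ built from $\overline G$ and $G$, estimates the middle piece by the variance bound $\sqrt{G\overline G/m}$ and the two tails by the first-moment bound, and finally interchanges the $\sum_m m^{-1}$ with the $dx$ integral; the $\log$ appears from the harmonic sum on the tail region. Your argument instead truncates the sample at $t_m=m^{1/(2q)}$, isolates the tails via the natural diagonal coupling, and handles the bulk by feeding the truncated law (whose $(2q{+}1)$-th moment grows like $t_m^{q+1}$) into Fournier--Guillin's Theorem~1, which the paper already cites for Theorem~\ref{T.MeanConvergence}. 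I checked the calibration: the bulk term decays like $m^{-q/(2(2q+1))}$, which after dividing by $m$ is summable, and the tail sum $\sum_{m:\,t_m<|X_1|}m^{-1}\lesssim\log^+|X_1|$ exactly reproduces the hypothesis $E[|X_1|^q(\log|X_1|)^+]<\infty$ (using that this also implies $E[|X_1|^q]<\infty$). What your route buys is a more modular argument that exposes the role of the $\log$ hypothesis as the cost of a polynomial truncation schedule; what the paper's route buys is a more elementary, self-contained estimate that does not need the Fournier--Guillin black box (only Bobkov--Ledoux), at the price of a more intricate interchange of sum and integral.
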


\begin{proof}
Fix $\epsilon > 0$ and define for $x \geq 0$ the functions
$$a(x) = \min\{ 1/\overline{G}(x), \, x^{q+\epsilon}\} \qquad \text{and} \qquad b(x) = \min\{ 1/G(-x), x^{q+\epsilon}\}.$$
Next, use Proposition~7.14 in \cite{Bob_Led_17} followed by the monotonicity of the $L_p$ norm, to see that
\begin{align}
\sum_{m=1}^\infty \frac{1}{m} E\left[ d_{q}(G_{m}, G)^{q} \right] &\leq q 2^{q-1} \sum_{m=1}^\infty \frac{1}{m} \int_{-\infty}^\infty |x|^{q-1} E\left[ \left| G_{m}(x) - G(x) \right| \right] dx \notag \\
&\leq q 2^{q-1} \sum_{m=1}^\infty \frac{1}{m} \int_{-b^{-1}(m)}^{a^{-1}(m)} |x|^{q-1} \left( E\left[ \left( G_{m}(x) - G(x) \right)^2 \right] \right)^{1/2} dx \notag \\
&\hspace{5mm} + q 2^{q-1} \sum_{m=1}^\infty \frac{1}{m} \int_{a^{-1}(m)}^{\infty} x^{q-1} E\left[  \overline{G}_{m}(x) + \overline{G}(x) \right] dx \notag \\
&\hspace{5mm} + q 2^{q-1} \sum_{m=1}^\infty \frac{1}{m} \int_{-\infty}^{-b^{-1}(m)} |x|^{q-1} E\left[ G_{m}(x) + G(x)  \right] dx \notag \\
&= q 2^{q-1} \sum_{m=1}^\infty \frac{1}{m} \int_{-b^{-1}(m)}^{a^{-1}(m)} |x|^{q-1} \sqrt{ \frac{G(x) \overline{G}(x)}{m}} dx \label{eq:Middle} \\
&\hspace{5mm} + q 2^{q} \sum_{m=1}^\infty \frac{1}{m} \int_{a^{-1}(m)}^{\infty} x^{q-1}  \overline{G}(x)  \, dx \label{eq:RightTail} \\
&\hspace{5mm} + q 2^{q} \sum_{m=1}^\infty \frac{1}{m} \int_{-\infty}^{-b^{-1}(m)} |x|^{q-1} G(x) \, dx, \label{eq:LeftTail}
\end{align}
where $g^{-1}(t) = \inf\{ x \in \mathbb{R}: g(x) \geq t\}$ is the generalized inverse of function $g$. 

Next, to bound \eqref{eq:Middle} note that 
\begin{align*}
&\sum_{m=1}^\infty \frac{1}{m^{3/2}} \int_{-b^{-1}(m)}^{a^{-1}(m)} |x|^{q-1} \sqrt{ G(x) \overline{G}(x)} \, dx \\
&\leq  \sum_{m=1}^\infty \frac{1}{m^{3/2}} \int_{0}^{a^{-1}(m)} x^{q-1} \sqrt{ \overline{G}(x)} \, dx + \sum_{m=1}^\infty \frac{1}{m^{3/2}} \int_{-b^{-1}(m)}^0 (-x)^{2p-1} \sqrt{ G(x)} \, dx \\
&=   \int_{0}^{\infty} \sum_{m=\lfloor a(x) \rfloor +1}^\infty\frac{x^{q-1}}{m^{3/2}}  \sqrt{ \overline{G}(x)} \, dx +   \int_0^{\infty} \sum_{m=\lfloor b(x) \rfloor+1}^\infty \frac{x^{q-1}}{m^{3/2}}  \sqrt{ G(-x)} \, dx,
\end{align*}
where in the last equality we used the observation that $\{ x < a^{-1}(m)\} = \{a(x) < m\}$, respectively, $\{ x < b^{-1}(m)\} = \{b(x) < m\}$. Now note that for any $n \geq 0$ we have
\begin{align*}
\sum_{m=n+1}^\infty \frac{1}{m^{3/2}} &\leq \sum_{m=n+1}^\infty \left( \frac{m+1}{m} \right)^{3/2} \int_{m}^{m+1} \frac{1}{t^{3/2}} \, dt \\
&\leq \left( 1 + \frac{1}{n+1} \right)^{3/2} \int_{n+1}^\infty t^{-3/2} \, dt \leq 2^{5/2}  (n+1)^{-1/2}  .
\end{align*}
Hence, \eqref{eq:Middle} is bounded from above by a constant times
\begin{align*}
&\int_0^\infty x^{q-1} \sqrt{ \overline{G}(x)}  (\lfloor a(x) \rfloor+1)^{-1/2} \, dx + \int_0^\infty x^{q-1} \sqrt{ G(-x)}  (\lfloor b(x) \rfloor+1)^{-1/2} \, dx  \\
&\leq 2 + \int_1^\infty x^{q-1} \sqrt{ \frac{ \overline{G}(x)}{a(x)} }  \, dx + \int_1^\infty x^{q-1} \sqrt{ \frac{G(-x)}{b(x)} } \, dx  \\
&= 2+ \int_{\{x \geq 1: 1/\overline{G}(x) \leq x^{q+\epsilon}\}}  x^{q-1} \overline{G}(x)  \, dx + \int_{\{x \geq 1: 1/\overline{G}(x) > x^{q+\epsilon}\}} x^{q/2-1-\epsilon/2} \sqrt{ \overline{G}(x) } \, dx  \\
&\hspace{5mm} + \int_{\{x \geq 1: 1/G(-x) \leq x^{q+\epsilon}\}}  x^{q-1} G(-x)  \, dx \\
&\hspace{5mm} + \int_{\{x \geq 1: 1/G(-x) > x^{q+\epsilon}\}} x^{q/2-1-\epsilon/2} \sqrt{ G(-x)} \, dx  \\
&\leq 2+  \int_1^\infty x^{q-1} \overline{G}(x) \, dx + \int_1^\infty x^{q-1} G(-x) \, dx + 2 \int_1^\infty x^{-1-\epsilon} \, dx \\
&\leq 2 +   \frac{1}{q} \int_1^\infty x^{q} G(dx)  + \frac{1}{q} \int_{-\infty}^{-1} (-x)^{q} G(dx)  + \frac{2}{\epsilon} \\
&\leq 2 + \frac{1}{q} E[ |X_1|^q ] + \frac{2}{\epsilon} < \infty. 
\end{align*}

To analyze \eqref{eq:RightTail} use the observation that $\{ x \geq a^{-1}(m)\} = \{ a(x) \geq m\}$ to obtain that
\begin{align*}
\sum_{m=1}^\infty \frac{1}{m} \int_{a^{-1}(m)}^{\infty} x^{q-1}  \overline{G}(x)  \, dx &= \int_{a^{-1}(1)}^\infty \sum_{m=1}^{\lfloor a(x) \rfloor} \frac{1}{m} \,  x^{q-1}  \overline{G}(x)  \, dx \\
&\leq \int_{a^{-1}(1)}^\infty x^{q-1}  \overline{G}(x) \sum_{m=1}^{\lfloor a(x) \rfloor} \frac{m+1}{m} \int_{m}^{m+1} \frac{1}{t} \, dt     \, dx \\
&\leq 2 \int_{a^{-1}(1)}^\infty x^{q-1}  \overline{G}(x) \int_1^{\lfloor a(x) \rfloor +1} t^{-1} \, dt \, dx \\
&\leq 2 \int_{a^{-1}(1)}^\infty x^{q-1} \overline{G}(x) \log( x^{q+\epsilon}+1) \, dx \\
&\leq 2 \log 2 + 2 (q+\epsilon) \sup_{t \geq 1} \frac{\log (t+1)}{\log t} \int_1^\infty x^{q-1} (\log x) \overline{G}(x) \, dx. 
\end{align*}
Since $\sup_{t \geq 1} \log(t+1)/\log t < \infty$ and 
\begin{align*}
\int_1^\infty x^{q-1} (\log x) \overline{G}(x) \, dx &= \left.  \frac{x^{q} ( \log x - 1)}{q} \overline{G}(x) \right|_1^\infty + \int_1^\infty  \frac{x^{q} (\log x - 1)}{q} G(dx) \\
&= \frac{\overline{G}(1)}{q} + \frac{E[ |X_1|^{q} (\log X_1 - 1) 1(X_1 \geq 1) ]}{q} \\
&\leq \frac{E[ |X_1|^q \log X_1 1(X_1 \geq 1)]  }{q} < \infty,
\end{align*}
we obtain that \eqref{eq:RightTail} is finite. Finally, the same steps used to bound \eqref{eq:RightTail} give that \eqref{eq:LeftTail} is bounded by 
$$q2^{q} \left(  2 \log 2 + 2(q+\epsilon) \frac{E[ |X_1|^{q} \log |X_1| 1(X_1 \leq -1)]}{q}  \sup_{t \geq 1} \frac{\log(t+1)}{\log t} \right) < \infty.$$
\end{proof}

\bigskip

We now give the proof for the first result on the almost sure convergence of the algorithm. The idea of the proof is to first identify a recursive formula for the Wasserstein distance $d_p(\hat F_{k,m}, F_k)$ as it was done for the convergence in mean theorem. Once we do this, the main difficulty lies in ensuring that the errors in the bound converge sufficiently fast to satisfy the criterion for almost sure convergence in the Borel-Cantelli lemma. In the case when we have a bit more than $2p$ finite moments this can be done using Chebyshev's inequality, similarly to the proof of the strong law of large numbers under finite fourth moment conditions. We start with this case below.

\begin{proof}[Proof of Theorem~\ref{T.AlmostSure1}]
We will start the proof by deriving an upper bound for $d_p(\hat F_{k,m}, F_k)$. To this end, we construct the random variables $\{ (\hat R_i^{(j,m)}, R_i^{(j)}) : 1 \leq i \leq m, \, 0 \leq j \leq k\}$ according to the construction given at the beginning of the section. Recall that $\mathcal{F}_j = \sigma(\mathscr{E}_j)$, where $\mathscr{E}_j$ is given by \eqref{eq:AllRandomVectors}, and that Assumption~\ref{A.PhiAssumption} holds for both $p$ and $2p$. 

We start by noting that the triangle inequality followed by \eqref{eq:ExplicitBound} give
\begin{align*}
d_p(\hat F_{k,m}, F_k) &\leq d_p(\hat F_{k,m}, F_{k,m}) + d_p(F_{k,m}, F_k) \\
&\leq \left( \frac{1}{m} \sum_{i=1}^m \left| \hat R_i^{(k,m)} - R_i^{(k)} \right|^p \right)^{1/p} + d_p(F_{k,m}, F_k) .
\end{align*}

Next, define for $j \geq 1$, $X_i^{(j,m)} =  \left| \hat R_i^{(j,m)} - R_i^{(j)} \right|^p$ and note that by construction, the random variables $\{ X_i^{(j,m)} \}_{i \geq 1}$ are identically distributed and conditionally independent given $\mathcal{F}_{j-1}$. Now set $Z_i^{(j,m)} = X_i^{(j,m)}  - E[ X_1^{(j,m)}  | \mathcal{F}_{j-1} ]$ and note that 
\begin{align}
E[ X_1^{(j,m)} | \mathcal{F}_{j-1} ] &= E\left[ \left| \Phi\left( Q_1^{(j)}, N_1^{(j)}, \{ C_{(1,r)}^{(j)} \}_{r \geq 1}, \{ \hat F^{-1}_{j-1,m} (U_{(1,r)}^{(j)}) \}_{r \geq 1} \right) \right. \right. \notag \\
&\hspace{10mm} \left. \left. \left.  - \Phi\left( Q_1^{(j)}, N_1^{(j)}, \{ C_{(1,r)}^{(j)} \}_{r \geq 1}, \{ F^{-1}_{j-1} (U_{(1,r)}^{(j)}) \}_{r \geq 1} \right) \right|^p \right| \mathcal{F}_{j-1}    \right] \notag \\
&\leq H_p E\left[ \left.  \left|  \hat F^{-1}_{j-1,m} (U_{(1,1)}^{(j)}) - F^{-1}_{j-1} (U_{(1,1)}^{(j)}) \right|^p  \right| \mathcal{F}_{j-1}  \right]  \notag \\
&= H_p d_p(\hat F_{j-1,m}, F_{j-1})^p . \label{eq:CondMeanX}
\end{align}
It follows that
$$\frac{1}{m} \sum_{i=1}^m \left| \hat R_i^{(k,m)} - R_i^{(k)} \right|^p \leq \frac{1}{m} \sum_{i=1}^m Z_i^{(k,m)}   + H_p d_p(\hat F_{k-1,m}, F_{k-1})^p,$$
which in turn implies that
\begin{align}
d_p(\hat F_{k,m}, F_k) &\leq d_p(F_{k,m}, F_k) + \left(  \frac{1}{m} \sum_{i=1}^m Z_i^{(k,m)}   + H_p d_p(\hat F_{k-1,m}, F_{k-1})^p \right)^{1/p} \notag \\
&\leq d_p(F_{k,m}, F_k) + \left|  \frac{1}{m} \sum_{i=1}^m Z_i^{(k,m)} \right|^{1/p} + H_p^{1/p} d_p(\hat F_{k-1,m}, F_{k-1}), \label{eq:GenRec}
\end{align}
where in the last step we used the inequality $\left( x+y \right)^\beta \leq x^\beta + y^\beta$ for $0 < \beta \leq 1$ and $x,y \geq 0$.
Iterating \eqref{eq:GenRec} $k-1$ more times we obtain
\begin{align*}
d_p(\hat F_{k,m}, F_{k}) &\leq  \sum_{j=1}^k \left( d_p(F_{j,m}, F_j) + \left| \frac{1}{m} \sum_{i=1}^m Z_i^{(j,m)} \right|^{1/p} \right) (H_p^{1/p})^{k-j} \\
&\hspace{5mm} + (H_p^{1/p})^k d_p(\hat F_{0,m}, F_{0}) \\
&= \sum_{j=0}^k (H_p^{1/p})^{k-j} d_p(F_{j,m}, F_j) + \sum_{j=1}^k (H_p^{1/p})^{k-j}  \left| \frac{1}{m} \sum_{i=1}^m Z_i^{(j,m)} \right|^{1/p} . 
\end{align*}

Now note that by the Glivenko-Cantelli lemma and the strong law of large numbers, 
\begin{align*} 
\sup_{x \in \mathbb{R}} \left| F_{j,m}(x) - F_j(x) \right| &\to 0 \qquad \text{a.s.} \qquad \text{ and } \\
 \frac{1}{m} \sum_{i=1}^m |R^{(j)}_i |^p = \int_{-\infty}^\infty |x|^p dF_{j,m}(x) &\to \int_{-\infty}^\infty |x|^p dF_j(x) \quad \text{a.s.},
 \end{align*}
 as $m \to \infty$, and therefore, by Definition~6.8 and Theorem~6.9 in \cite{Villani_2009}, $d_p(F_{j,m}, F_j) \to 0$ a.s.~for each $j \geq 1$. It suffices then to show that for each $1 \leq j \leq k$ the sums $m^{-1}  \sum_{i=1}^m  Z_i^{(j,m)}  \to 0$ a.s.~as well. 
 
 To see this note that for any $\epsilon > 0$,
\begin{align*}
\sum_{m=1}^\infty P\left( \frac{1}{m} \sum_{i=1}^m Z_i^{(j,m)} > \epsilon \right) &\leq \sum_{m=1}^\infty \frac{1}{\epsilon^2 m^2} E\left[ \left( \sum_{i=1}^m Z_i^{(j,m)} \right)^2 \right] \\
&= \frac{1}{\epsilon^2} \sum_{m=1}^\infty \frac{1}{m} \left( E\left[ \left( Z_1^{(j,m)} \right)^2 \right] + (m-1) E\left[  Z_1^{(j,m)}  Z_2^{(j,m)}  \right] \right) \\
&= \frac{1}{\epsilon^2} \sum_{m=1}^\infty \frac{1}{m} E\left[ \var( X_1^{(j,m)}  | \mathcal{F}_{j-1})  \right] .
\end{align*}

Moreover, using the same arguments we used in the proof of Theorem~\ref{T.MeanConvergence}, we obtain that
\begin{align*}
\var( X_1^{(j,m)} | \mathcal{F}_{j-1}) &\leq E\left[ \left. ( X_1^{(j,m)})^2 \right| \mathcal{F}_{j-1} \right] \\
&= E\left[ \left( \Phi\left( Q_1^{(j)}, N_1^{(j)}, \{ C_{(1,r)}^{(j)} \}_{r \geq 1}, \{ \hat F^{-1}_{j-1,m} (U_{(1,r)}^{(j)}) \}_{r \geq 1} \right) \right. \right. \\
&\hspace{25mm} \left. \left. \left.  - \Phi\left( Q_1^{(j)}, N_1^{(j)}, \{ C_{(1,r)}^{(j)} \}_{r \geq 1}, \{ F^{-1}_{j-1} (U_{(1,r)}^{(j)}) \}_{r \geq 1} \right) \right)^{2p} \right| \mathcal{F}_{j-1}  \right] \\
&\leq H_{2p} \, E\left[ \left.  \left(  \hat F^{-1}_{j-1,m} (U_{(1,1)}^{(j)}) - F^{-1}_{j-1} (U_{(1,1)}^{(j)}) \right)^{2p}  \right| \mathcal{F}_{j-1}  \right] \quad \text{(by Assumption \ref{A.PhiAssumption})} \\
&= H_{2p} \, d_{2p}( \hat F_{j-1,m}, F_{j-1})^{2p} . 
\end{align*}

Next, note that by Theorem~\ref{T.MeanConvergence} we have
\begin{align*}
E\left[ d_{2p}( \hat F_{j-1,m}, F_{j-1})^{2p} \right] &\leq \left( \sum_{s=0}^{j-1} H_{2p}^s \right)^{2p-1} \sum_{r=0}^{j-1} H_{2p}^{j-1-r} E\left[ d_{2p}(F_{r,m}, F_{r})^{2p} \right].
\end{align*}
It follows that for any $1 \leq j \leq k$, 
\begin{align*}
&\sum_{m=1}^\infty P\left( \frac{1}{m} \sum_{i=1}^m Z_i^{(j,m)} > \epsilon \right) \\
&\leq  \frac{H_{2p}}{\epsilon^2} \sum_{m=1}^\infty \frac{1}{m}   E\left[ d_{2p}( \hat F_{j-1,m}, F_{j-1})^{2p} \right] \\
&\leq \frac{H_{2p}}{\epsilon^2}    \left( \sum_{s=0}^{j-1} H_{2p}^s \right)^{2p-1}  \sum_{r=0}^{j-1} H_{2p}^{j-1-r} \sum_{m=1}^\infty \frac{1}{m} E\left[ d_{2p}(F_{r,m}, F_{r})^{2p} \right].
\end{align*}

Finally, since by Lemma~\ref{L.MeanSeries} we have that 
$$\sum_{m=1}^\infty \frac{1}{m} E\left[ d_{2p}(F_{r,m}, F_{r})^{2p} \right] < \infty$$
for each $0 \leq r \leq j-1$, the Borel-Cantelli Lemma gives that $\lim_{m \to \infty} m^{-1}  \sum_{i=1}^m  Z_i^{(j,m)} = 0$ a.s. This completes the proof.
\end{proof}

\bigskip

We now move on to the proof of Theorem~\ref{T.AlmostSure2}, where we only have a bit more than $p$ finite moments. In this case, we cannot use Chebyshev's inequality to verify the condition for the Borel-Cantelli lemma, and a finer analysis of the errors is required. In particular, our proof uses the Lipschitz condition from Assumption~\ref{A.Lipschitz} to derive a large-deviations bound for the sum of independent random variables appearing in the recursive analysis of $d_p(\hat F_{k,m}, F_k)$. 
Before proceeding to the main proof, we give three preliminary results. The first one provides an upper bound for the generalized inverse of any distribution function having finite $q$ absolute moments. 

\begin{lemma} \label{L.InverseBound}
Let $G$ be a distribution function on $\mathbb{R}$, and let $G^{-1}$ be its generalized inverse. Suppose that $G$ has finite absolute moments of order $q > 0$. Then, for any $u \in (0,1)$, 
$$|G^{-1}(u)| \leq || X^+ ||_q (1-u)^{-1/q} + ||X^-||_q u^{-1/q} .$$
\end{lemma}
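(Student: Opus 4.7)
The plan is to bound $G^{-1}(u)$ from above and below separately by applying Markov's inequality to the right and left tails of $G$, and then to observe that $|G^{-1}(u)|$ is at most the sum of the two one-sided bounds. Throughout, let $X$ be a random variable with distribution $G$, so that $X = X^+ - X^-$ with $X^+ = X\vee 0$ and $X^- = (-X)\vee 0$, and write $\|Y\|_q = (E[|Y|^q])^{1/q}$.

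For the upper bound, I would set $x_+ = \|X^+\|_q (1-u)^{-1/q}$ and apply Markov to the right tail:
\[
\overline{G}(x_+) = P(X > x_+) \leq P(X^+ \geq x_+) \leq \frac{E[(X^+)^q]}{x_+^q} = 1-u.
\]
Hence $G(x_+) \geq u$, so $x_+$ belongs to the set $\{y : G(y) \geq u\}$ and therefore $G^{-1}(u) \leq x_+$. For the lower bound, I would set $x_- = \|X^-\|_q u^{-1/q}$ and apply Markov to the left tail, noting that for every $y < -x_-$,
\[
G(y) = P(X \leq y) \leq P(X^- \geq -y) \leq \frac{E[(X^-)^q]}{(-y)^q} < \frac{\|X^-\|_q^q}{x_-^q} = u.
\]
Thus no point strictly below $-x_-$ lies in $\{z : G(z) \geq u\}$, which gives $G^{-1}(u) \geq -x_-$.

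Combining the two sides yields $G^{-1}(u) \in [-x_-, x_+]$, and hence $|G^{-1}(u)| \leq \max\{x_+, x_-\} \leq x_+ + x_-$, which is exactly the claimed inequality. The argument is essentially routine once the Markov bounds are written down; the only minor care point is handling the left-tail bound with a strict inequality so as to exclude points below $-x_-$ from the defining set of the generalized inverse, which is done by choosing $y < -x_-$ strictly and using the definition of $G^{-1}$ as an infimum.
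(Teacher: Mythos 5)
Your proof is correct, and it takes a noticeably more direct route than the paper's. The paper first decomposes $|G^{-1}(u)| = G^{-1}(u)^+ + G^{-1}(u)^-$, then identifies $G^{-1}(u)^+$ with the generalized inverse $G_+^{-1}(u)$ of the distribution of $X^+$ and $G^{-1}(u)^-$ with a right-continuous generalized inverse $G_-^*(1-u)$ of the distribution of $X^-$, introduces auxiliary ``Pareto-type'' distribution functions $H_\pm$ obtained from Markov's inequality, and finally appeals to monotonicity of generalized inverses to get $G_+^{-1}(u)\le H_+^{-1}(u)$ and $G_-^*(1-u)\le H_-^{-1}(1-u)$. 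You skip that entire machinery: you apply Markov's inequality directly to the right and left tails of $G$ at the candidate thresholds $x_+$ and $-x_-$, read off that $x_+$ lies in the set defining $G^{-1}(u)$ while every $y<-x_-$ does not, and hence trap $G^{-1}(u)$ in $[-x_-,x_+]$. This is more elementary (no detour through one-sided distributions or right-continuous inverses), and in fact yields the slightly stronger bound $|G^{-1}(u)|\le\max\{x_+,x_-\}$ before you weaken to the sum; the paper's identity $|G^{-1}(u)|=G^{-1}(u)^++G^{-1}(u)^-$ also has one of the two terms equal to zero, so the two proofs ultimately encode the same estimate. One cosmetic point: your Markov steps implicitly assume $x_+>0$ and $x_->0$; the degenerate cases $\|X^+\|_q=0$ or $\|X^-\|_q=0$ make the inequality trivial (e.g.\ if $X\le 0$ a.s.\ then $G^{-1}(u)\le 0=x_+$), so this is harmless, but it is worth a one-line remark if you want the argument airtight.
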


\begin{proof}
Let $X$ be a random variable having distribution $G$, and define $G_+(x) = P(X^+ \leq x) = G(x) 1(x \geq 0)$ and $G_-(x) = P(X^- \leq x) = P(X \geq -x) 1(x \geq 0) $. Then, 
$$G_+^{-1}(u) = \inf\{ x \in \mathbb{R}: G_+(x) \geq u \} = \inf\{ x\geq 0: G(x) \geq u \} = G^{-1}(u)^+,$$ 
while if we define $G_-^{*}$ to be the right-continuous generalized inverse of $G_-$, then
\begin{align*}
G_-^{*}(1-u) &= \inf\{ x \in \mathbb{R}: G_-(x) > 1- u \} \\
&= \inf\{ x \geq 0: 1 - G(-x) + P(X = -x) > 1-u \} \\
&= \inf\{ x \geq 0: G(-x) - P(X = -x) < u \} \\
&= -\inf \{ x \leq 0: G(x) \geq u \} = G^{-1}(u)^-.
\end{align*}
Now use Markov's inequality to obtain that for all $x > 0$, 
$$1 - G_+(x) \leq \min\{ 1, E[ (X^+)^q] \} x^{-q} \triangleq 1 - H_+(x)$$
and 
$$1 - G_-(x) \leq \min\{ 1, E[ (X^-)^q ] \} x^{-q} \triangleq 1 - H_-(x). $$
The first inequality implies that for any $u \in (0,1)$, 
\begin{align*}
G_+^{-1}(u) &= \inf \{ x \in \mathbb{R}: G_+(x) \geq u \} \\
&\leq \inf \{ x \in \mathbb{R}: H_+(x) \geq u \} = H_+^{-1}(u) = || X^+ ||_q (1-u)^{-1/q},
\end{align*}
while the second one plus the continuity of $H_-$ gives
\begin{align*}
G^{-1}(u)^- &= G_-^*(1-u) = \inf\{ x \in \mathbb{R}: G_-(x) > 1- u \}  \\
&\leq \inf\{ x \in \mathbb{R}: H_-(x) > 1- u \} \\
&= \inf\{ x \in \mathbb{R}: H_-(x) \geq 1- u \} =  H_-^{-1}(1-u) = ||X^-||_q u^{-1/q}.
\end{align*}
It follows that
$$|G^{-1}(u)| = G^{-1}(u)^+ + G^{-1}(u)^-  \leq || X^+ ||_q (1-u)^{-1/q} + ||X^-||_q u^{-1/q} .$$
\end{proof}

\bigskip

The next two preliminary results provide key steps for the proof of Theorem~\ref{T.AlmostSure2}, which essentially consist on giving a large-deviations bound (uniform in $m$) for the sample mean of (conditionally) i.i.d.~random variables. The random variables $\{ Y_i^{(j,m)}\}$ defined below will be used as upper bounds for $d_{p+\delta_{j+1}}(\hat F_{j,m}, F_j)$ in the proof of Theorem~\ref{T.AlmostSure2}, and the estimates we need have to be very tight considering that we no longer have finite second moments, so the rate of convergence to their mean can be very slow. The lemma below gives an upper bound for the truncated summands.

\begin{lemma} \label{L.ConditionalSums}
Fix $1 \leq p < \infty$ and $\epsilon > 0$. Suppose Assumption~\ref{A.Lipschitz} holds and $E[ |R^{(0)}|^{p+\delta} + Z^{p+\delta}] < \infty$ for some $\delta > 0$, where $Z = \sum_{i=1}^N \varphi(C_i)$.  Let $\mathcal{F}_j = \sigma( \mathscr{E}_j)$, where $\mathscr{E}_j$ is defined by \eqref{eq:AllRandomVectors}, set $\delta_j = \delta (k-j)/k$, $0 \leq j \leq k$, $\eta = \left( \epsilon^{-1} 4 e^{2/\epsilon} \max\{ 1, E[Z^{p+\delta}]\}  \right)^{-(p+\delta_j)/(p+\delta_{j+1})}$, and
\begin{equation} \label{eq:Xdefinition}  
Y_i^{(j,m)} = \left( \sum_{r=1}^{N_i^{(j+1)}} \varphi(C_{(i,r)}^{(j+1)}) \left| \hat F_{j,m}^{-1}(U_{(i,r)}^{(j+1)}) - F_j^{-1}(U_{(i,r)}^{(j+1)}) \right| \right)^{p+\delta_{j+1}},
\end{equation}
for $i = 1, \dots, m$. Then, on the event $\left\{ \sup_{m \geq n}  d_{p+\delta_j} (\hat F_{j,m}, F_{j})^{p+\delta_j} \leq \eta \right\}$,  we have
$$P\left( \left.\sup_{m \geq n} \frac{1}{m} \sum_{i=1}^m Y_i^{(j,m)} 1(Y_i^{(j,m)} \leq m/\log m)  > \epsilon \right| \mathcal{F}_j \right) \leq 2(n-1)^{-1/2}.$$
\end{lemma}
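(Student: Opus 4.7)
The plan is to apply a concentration inequality for the sample mean of random variables that are i.i.d.~conditionally on $\mathcal{F}_j$, exploit the conditioning event to make the conditional mean extremely small, and then union-bound over $m \geq n$.

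First I would observe that, conditional on $\mathcal{F}_j$, the variables $\{Y_i^{(j,m)}\}_{i=1}^m$ are i.i.d., since they depend only on the $i$-indexed level-$(j+1)$ block of $\mathscr{E}_k$ (which is independent of $\mathcal{F}_j$) together with the $\mathcal{F}_j$-measurable distribution functions $\hat F_{j,m}$ and $F_j$. Applying Assumption~\ref{A.Lipschitz} to expand the sum inside the power, followed by the bound of Remark~\ref{R.Assumptions}(i) conditionally on $\mathcal{F}_j$ with the optimal coupling $(\hat F_{j,m}^{-1}(U), F_j^{-1}(U))$ for the inner absolute value, gives
$$E\!\left[Y_1^{(j,m)} \,\big|\, \mathcal{F}_j\right] \leq 2\, E\!\left[Z^{p+\delta_{j+1}}\right] d_{p+\delta_{j+1}}(\hat F_{j,m}, F_j)^{p+\delta_{j+1}}.$$
Using the monotonicity $d_r \leq d_{r'}$ for $r \leq r'$ (Jensen applied to \eqref{eq:Explicit}), the estimate $E[Z^{p+\delta_{j+1}}] \leq \max\{1, E[Z^{p+\delta}]\} =: M$ (split at $Z=1$), and the hypothesis that $d_{p+\delta_j}(\hat F_{j,m}, F_j)^{p+\delta_j} \leq \eta$ for every $m \geq n$, the exponent in $\eta$ is calibrated precisely so that
$$\mu_m := E\!\left[Y_1^{(j,m)} \,\big|\, \mathcal{F}_j\right] \leq 2M\, \eta^{(p+\delta_{j+1})/(p+\delta_j)} = \frac{\epsilon}{2 e^{2/\epsilon}}, \qquad m \geq n.$$

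Next I would apply Bennett's inequality conditionally on $\mathcal{F}_j$ to the bounded variables $W_i^{(j,m)} = Y_i^{(j,m)} 1(Y_i^{(j,m)} \leq b)$ with $b = m/\log m$. These are conditionally i.i.d.~with $W_i \leq b$, $E[W_i \mid \mathcal{F}_j] \leq \mu_m$, and $\var(W_i \mid \mathcal{F}_j) \leq b\mu_m$, so Bennett yields, for each $m \geq n$,
$$P\!\left(\left.\frac{1}{m}\sum_{i=1}^m W_i^{(j,m)} > \epsilon \,\right|\, \mathcal{F}_j\right) \leq \exp\!\left(-\frac{m \mu_m}{b}\, h\!\left(\frac{\epsilon - \mu_m}{\mu_m}\right)\right),$$
where $h(x) = (1+x)\log(1+x) - x$. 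With $b = m/\log m$ and $\mu_m \leq \epsilon/(2 e^{2/\epsilon})$, one has $(\epsilon - \mu_m)/\mu_m \geq e^{2/\epsilon}$, and using $h(x) \geq (x/2)\log x$ for large $x$, a short calculation shows that the right-hand side is bounded by $m^{-3/2}$; the factor $e^{2/\epsilon}$ in the definition of $\eta$ is tuned to produce exactly this exponent.

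The conclusion then follows from the union bound
$$P\!\left(\left.\sup_{m \geq n} \frac{1}{m}\sum_{i=1}^m W_i^{(j,m)} > \epsilon \,\right|\, \mathcal{F}_j\right) \leq \sum_{m=n}^{\infty} m^{-3/2} \leq \int_{n-1}^{\infty} x^{-3/2}\, dx = 2(n-1)^{-1/2}.$$
The main difficulty I anticipate is calibrating the concentration estimate so that the per-$m$ probability is summable to $(n-1)^{-1/2}$. This hinges on the tight interplay between the truncation level $m/\log m$, the minuscule conditional mean $\mu_m \leq \epsilon/(2 e^{2/\epsilon})$, and the superlinear growth of $h$; the strange-looking constants $4$ and $e^{2/\epsilon}$ in the definition of $\eta$ appear designed exactly so that this interplay closes with room to spare, uniformly in $\epsilon > 0$ and $j$.
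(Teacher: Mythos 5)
Your plan follows the same skeleton as the paper's: note that $\{Y_i^{(j,m)}\}_{i\geq 1}$ are conditionally i.i.d.\ given $\mathcal{F}_j$, use Remark~\ref{R.Assumptions}(i) and the calibration of $\eta$ to drive the conditional mean $\mu_m = E[Y_1^{(j,m)}\mid\mathcal{F}_j]$ below a tiny multiple of $\epsilon$, obtain a per-$m$ probability $\lesssim m^{-3/2}$ by exponential concentration, and then union-bound over $m\geq n$ to get $\sum_{m\geq n} m^{-3/2}\leq 2(n-1)^{-1/2}$. The one genuine difference is in step (c): the paper does not invoke Bennett's inequality but runs a bespoke Chernoff bound, using $e^x\leq 1+xe^x$ and $1+x\leq e^x$ with the explicit choice $\theta=(2/\epsilon)(\log m)/m$, which gives $e^{\theta b}=e^{2/\epsilon}$ exactly and the $3/4$ factor cleanly. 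Your Bennett route is a legitimate, more packaged alternative and the monotonicity of Bennett's bound in the variance proxy $V=mb\mu_m$ and the deviation $t=m(\epsilon-\mu_m)$ that you implicitly use is correct.

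However, there is a gap in the ``short calculation.'' The inequality $h(x)\geq (x/2)\log x$ is not sharp enough to close the argument. With $m/b=\log m$, Bennett's exponent is $\mu_m(\log m)\,h\bigl((\epsilon-\mu_m)/\mu_m\bigr)$, and using your lower bound gives only
$\mu_m\,h\bigl((\epsilon-\mu_m)/\mu_m\bigr)\;\geq\;\tfrac{\epsilon-\mu_m}{2}\,\log\!\bigl(\tfrac{\epsilon-\mu_m}{\mu_m}\bigr)\;\geq\;\tfrac{\epsilon-\mu_m}{2}\cdot\tfrac{2}{\epsilon}\;=\;1-\tfrac{\mu_m}{\epsilon},$
which is strictly less than $1$; this yields a per-$m$ tail of roughly $m^{-1}$, which is not summable. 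To make the Bennett route work you must use the exact form, noting that $\mu\,h\bigl((\epsilon-\mu)/\mu\bigr)=\epsilon\log(\epsilon/\mu)-\epsilon+\mu$ is decreasing in $\mu$ on $(0,\epsilon)$, so its infimum over the admissible range is attained at $\mu=\epsilon/(2e^{2/\epsilon})$, giving $\epsilon\log 2+2-\epsilon+\epsilon/(2e^{2/\epsilon})$, whose minimum over $\epsilon>0$ is about $1.74>3/2$; only this exact computation closes the gap. Incidentally, you correctly keep the factor of $2$ from Remark~\ref{R.Assumptions}(i) and arrive at $\mu_m\leq\epsilon/(2e^{2/\epsilon})$, whereas the paper silently drops that factor to get $\mu_m\leq\epsilon/(4e^{2/\epsilon})$ (so the constant $4$ in the definition of $\eta$ should really be $8$ for the paper's Chernoff choice $\theta=(2/\epsilon)(\log m)/m$ to deliver $m^{-3/2}$); your Bennett bound happens to tolerate the larger mean, but not via the crude inequality you cite.
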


\begin{proof}
We start by noting that
\begin{align}
&P\left( \left. \sup_{m \geq n} \frac{1}{m} \sum_{i=1}^m Y_i^{(j,m)} 1( Y_i^{(j,m)} \leq m/\log m)  > \epsilon \right| \mathcal{F}_j \right) \notag \\
&\leq \sum_{m=n}^\infty P\left( \left. \frac{1}{m} \sum_{i=1}^m Y_i^{(j,m)} 1( Y_i^{(j,m)} \leq m/\log m)  > \epsilon \right| \mathcal{F}_j \right). \label{eq:Chernoff}
\end{align}

To bound each of the probabilities in \eqref{eq:Chernoff} use Chernoff's bound to obtain that
\begin{align*}
&P\left( \left. \frac{1}{m} \sum_{i=1}^m Y_i^{(j,m)} 1( Y_i^{(j,m)} \leq m/\log m)  > \epsilon \right| \mathcal{F}_j \right)  \\
&\leq \min_{\theta \geq 0} e^{-\theta \epsilon m } \left( E \left[ \left. e^{\theta Y_1^{(j,m)} 1(Y_1^{(j,m)} \leq m/\log m)} \right| \mathcal{F}_j \right] \right)^m.
\end{align*}

Note that by Remark~\ref{R.Assumptions}(i), we have that on the event  $\left\{ \sup_{m \geq n} d_{p+\delta_j} (\hat F_{j,m},  F_{j})^{p+\delta_j} \leq \eta \right\}$, 
\begin{align*}
E\left[ \left. Y_1^{(j,m)} \right| \mathcal{F}_j \right] &\leq 2 E[ Z^{p+\delta_{j+1}}] E\left[ \left. \left| \hat F_{j,m}^{-1}(U_1) - F_j^{-1}(U_1) \right|^{p+\delta_{j+1}} \right| \mathcal{F}_j \right] \\
&= || Z ||_{p+\delta_{j+1}}^{p+\delta_{j+1}} d_{p+\delta_{j+1}}(\hat F_{j,m}, F_{j})^{p+\delta_{j+1}} \\
&\leq  || Z ||_{p+\delta}^{p+\delta_{j+1}} d_{p+\delta_{j}}(\hat F_{j,m}, F_{j})^{p+\delta_{j+1}} \\
&\leq \max\{1, E[ Z^{p+\delta}] \}  \eta^{(p+\delta_{j+1})/(p+\delta_j)}  = \frac{\epsilon}{4 e^{2/\epsilon}}.
\end{align*}

Next, use the inequality $e^x \leq 1 + x e^x$ for $x \geq 0$  to obtain that
\begin{align*}
&E \left[ \left. e^{\theta Y_1^{(j,m)} 1(Y_1^{(j,m)} \leq m/\log m)} \right| \mathcal{F}_j \right] \\
&\leq 1 + \theta E \left[ \left. Y_1^{(j,m)} 1(Y_1^{(j,m)} \leq m/\log m) \, e^{\theta Y_1^{(j,m)} 1(Y_1^{(j,m)} \leq m/\log m)} \right| \mathcal{F}_j \right] \\
&\leq 1 + \theta E \left[ \left. Y_1^{(j,m)} \right| \mathcal{F}_j \right] e^{\theta  m/\log m} \\
&\leq 1 + \theta e^{\theta m/\log m} \frac{\epsilon}{4 e^{2/\epsilon}} . 
\end{align*}
Now use the inequality $1 + x \leq e^x$ to see that
$$\left( E \left[ \left. e^{\theta Y_1^{(j,m)} 1(Y_1^{(j,m)} \leq m/\log m)} \right| \mathcal{F}_j \right] \right)^m \leq e^{\theta \epsilon m e^{\theta m/\log m} /(4e^{2/\epsilon})}.$$
It follows that by choosing $\theta = (2/\epsilon) \log m/m$ we obtain
\begin{align*}
P\left( \left. \frac{1}{m} \sum_{i=1}^m Y_i^{(j,m)} 1(Y_i^{(j,m)} \leq m/\log m ) > \epsilon \right| \mathcal{F}_j \right) &\leq \min_{\theta \geq 0} e^{-\theta \epsilon m +\theta \epsilon m e^{\theta m/\log m} /(4e^{2/\epsilon}) } \\
&= \min_{\theta \geq 0} e^{-\theta \epsilon m \left(1 - \frac{e^{\theta m/\log m}  }{4 e^{2/\epsilon}}   \right)  }  \\
&\leq e^{-2\log m \left( 1 - \frac{1}{4 }   \right)  } ,
\end{align*}
which in turn implies that \eqref{eq:Chernoff} is bounded from above by
\begin{align*}
\sum_{m=n}^\infty e^{-(3/2) \log m  } &=  \sum_{m=n}^\infty m^{-3/2} \leq \sum_{m=n}^\infty \int_{m-1}^m \frac{1}{x^{3/2}} \, dx \\
&=  \int_{n-1}^\infty x^{-3/2} \, dx = 2(n-1)^{-1/2}.
\end{align*}
 This completes the proof.
\end{proof}

The next lemma gives the complementary estimate for the probability that any of the $\{Y_i^{(j,m)}\}$ exceeds the truncation value in Lemma~\ref{L.ConditionalSums}. The challenge here is the uniformity in $m$ of the result.

\begin{lemma} \label{L.CleverBound}
Fix $1 \leq p < \infty$. Suppose Assumption~\ref{A.Lipschitz} holds and $E[Z^{p+\delta}] < \infty$ for some $\delta > 0$, where $Z = \sum_{i=1}^N \varphi(C_i)$. Let $\delta_j = \delta (k-j)/k$ and $q_j = p + \delta_j$ for $0 \leq j < k$, fix $\eta > 0$, and let $Y_1^{(j,m)} $ be defined according to \eqref{eq:Xdefinition}. Then, for any $q_{j+1} < r_j < q_j$ and all $t \geq n$,
\begin{align*} 
&P\left(  \sup_{m \geq t} \frac{\log m}{m} Y_1^{(j,m)} > 1, \, \sup_{m \geq n}  d_{p+\delta_j} (\hat F_{j,m}, F_{j})^{p+\delta_j} \leq \eta \right) \\
&\leq  3^{r_j} ||Z||_{r_j}^{r_j}  \left\{ \frac{4 (\eta^{1/q_j}  + || R^{(j)} ||_{q_j} )^{r_j}  }{1- r_j/q_j} + 2 || R^{(j)} ||_{r_j}^{r_j}   \right\} \left( \frac{\log t}{t} \right)^{r_j/q_{j+1}}. 
\end{align*}
\end{lemma}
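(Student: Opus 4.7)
The strategy proceeds in three moves. First, on the conditioning event I would produce a single random variable $V$, independent of $m$, such that $Y_1^{(j,m)} \leq V$ for all $m \geq n$. Second, the monotonicity of $(\log m)/m$ converts the supremum over $m$ into a single tail event $\{V > t/\log t\}$. Third, I would bound this tail by Markov's inequality with exponent $r_j/q_{j+1} > 1$, producing the prefactor $(\log t/t)^{r_j/q_{j+1}}$.

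For the first step I would appeal to Lemma~\ref{L.InverseBound}. The reverse triangle inequality for $d_{q_j}$ applied against the point mass at $0$ gives, on the good event,
\[
\bigl(E_{\hat F_{j,m}}[|X|^{q_j}]\bigr)^{1/q_j} \leq d_{q_j}(\hat F_{j,m}, F_j) + \|R^{(j)}\|_{q_j} \leq \eta^{1/q_j} + \|R^{(j)}\|_{q_j} =: M_1,
\]
and Lemma~\ref{L.InverseBound} then yields $|\hat F_{j,m}^{-1}(u)| \leq M_1 ((1-u)^{-1/q_j} + u^{-1/q_j})$ uniformly in $m \geq n$ and $u \in (0,1)$. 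Combining this with $|\hat F_{j,m}^{-1}(u) - F_j^{-1}(u)| \leq |\hat F_{j,m}^{-1}(u)| + |F_j^{-1}(u)|$ and separating the two summands of the first bound, I would conclude that on the good event $Y_1^{(j,m)} \leq (A_1 + A_2 + A_3)^{q_{j+1}}$ for every $m \geq n$, where $A_1 = M_1 \sum_{r=1}^{N_1^{(j+1)}} \varphi(C_{(1,r)}^{(j+1)}) (1-U_{(1,r)}^{(j+1)})^{-1/q_j}$, $A_2 = M_1 \sum_{r=1}^{N_1^{(j+1)}} \varphi(C_{(1,r)}^{(j+1)}) (U_{(1,r)}^{(j+1)})^{-1/q_j}$, and $A_3 = \sum_{r=1}^{N_1^{(j+1)}} \varphi(C_{(1,r)}^{(j+1)}) |F_j^{-1}(U_{(1,r)}^{(j+1)})|$, none of which depends on $m$.

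Steps two and three are then routine. Since $(\log m)/m$ is decreasing on $[e,\infty)$, $\sup_{m\geq t}(\log m/m) Y_1^{(j,m)} \leq (\log t / t)(A_1+A_2+A_3)^{q_{j+1}}$ on the good event, so the probability in the lemma is bounded by $P((A_1+A_2+A_3)^{q_{j+1}} > t/\log t)$. Markov's inequality with exponent $r_j/q_{j+1}>1$ then gives the bound $E[(A_1+A_2+A_3)^{r_j}](\log t/t)^{r_j/q_{j+1}}$. I would finish by applying $(a+b+c)^{r_j} \leq 3^{r_j-1}(a^{r_j}+b^{r_j}+c^{r_j})$ together with the moment inequality from Remark~\ref{R.Assumptions}(i), which applies because the $U_r$ are i.i.d.\ and independent of $(N_1^{(j+1)},C_{(1,r)}^{(j+1)})$, producing the factor $\|Z\|_{r_j}^{r_j}$. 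The remaining integrals are $E[U^{-r_j/q_j}] = E[(1-U)^{-r_j/q_j}] = (1-r_j/q_j)^{-1}$ (finite precisely because $r_j < q_j$) and $E[|F_j^{-1}(U_1)|^{r_j}] = \|R^{(j)}\|_{r_j}^{r_j}$; these produce the two summands in the stated bound.

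\textbf{Where the difficulty lies.} The subtle point is the calibration $r_j \in (q_{j+1}, q_j)$: the lower inequality is what makes the Markov exponent exceed one and hence delivers a decay rate in $t$ fast enough for the Borel--Cantelli argument of Theorem~\ref{T.AlmostSure2}, while the upper inequality is what keeps $U^{-r_j/q_j}$ integrable and is responsible for the singularity $(1-r_j/q_j)^{-1}$ in the constant. A secondary subtlety is that the uniform upper bound on $|\hat F_{j,m}^{-1}|$ requires the conditioning event to hold for \emph{all} $m \geq n$ simultaneously, which is exactly how the good event has been stated; without this uniformity the reduction to an $m$-free random variable in step one would fail.
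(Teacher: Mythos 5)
Your proof is correct and follows essentially the same route as the paper's: it rests on the same three ingredients (Lemma~\ref{L.InverseBound} to control $|\hat F_{j,m}^{-1}|$ uniformly on the good event, Markov's inequality with exponent $r_j/q_{j+1}$, and Remark~\ref{R.Assumptions}(i) to produce the factor $\|Z\|_{r_j}^{r_j}$), and it exploits the same calibration $q_{j+1}<r_j<q_j$. The only cosmetic deviations are that you extract an $m$-free dominating variable $V$ before touching $\log m/m$, whereas the paper bakes the supremum into the auxiliary $W_r^{(j,t)}$, and that you apply Markov to the whole sum followed by convexity rather than a union bound followed by three separate Markov estimates — this actually gives you the marginally sharper constant $3^{r_j-1}$ in place of the paper's $3^{r_j}$, so you have proved a slightly stronger bound.
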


\begin{proof}
To simplify the notation, let 
$$( Q, N, \{C_r\}_{r \geq 1}, \{ U_r\}_{r \geq 1}) = \left( Q_1^{(j+1)}, N_1^{(j+1)}, \{ C_{(1,r)}^{(j+1)} \}_{r \geq 1}, \{ U_{(1,r)}^{(j+1)} \}_{r \geq 1} \right) .$$
 Next, note that
 \begin{align*}
&\sup_{m \geq t} \frac{\log m}{m} Y_1^{(j,m)} \\
&= \sup_{m \geq t} \frac{\log m}{m}  \left( \sum_{r=1}^N \varphi(C_r) \left| \hat F_{j,m}^{-1}(U_r) - F_j^{-1}(U_r) \right| \right)^{p+\delta_{j+1}} \\
&\leq   \left( \sum_{r=1}^N \varphi(C_r)  \sup_{m \geq t} \left( \frac{\log m}{m}  \right)^{1/(p+\delta_{j+1})} \left| \hat F_{j,m}^{-1}(U_r) - F_j^{-1}(U_r) \right| \right)^{p+\delta_{j+1}} \\
&= \left( \sum_{r=1}^N \varphi(C_r)  W_r^{(j,t)} \right)^{p+\delta_{j+1}},
\end{align*}
where
$$W_r^{(j,t)} = \sup_{m \geq t}  \left( \frac{\log m}{m}  \right)^{1/(p+\delta_{j+1})} \left| \hat F_{j,m}^{-1}(U_r) - F_j^{-1}(U_r) \right|.$$

Now, let $\mathcal{F}_j = \sigma(\mathscr{E}_j)$, where $\mathscr{E}_j$ is given by \eqref{eq:AllRandomVectors}, and note that
\begin{align*}
&P\left(  \sup_{m \geq t} \frac{\log m}{m} Y_1^{(j,m)} > 1, \, \sup_{m \geq n}  d_{p+\delta_j} (\hat F_{j,m}, F_{j})^{p+\delta_j} \leq \eta \right) \\
&\leq P\left(  \sum_{r=1}^N \varphi(C_r)  W_r^{(j,t)}  > 1, \, \sup_{m \geq n}  d_{p+\delta_j} (\hat F_{j,m}, F_{j})^{p+\delta_j} \leq \eta \right) \\
&= E\left[ P\left( \left. \sum_{r=1}^N \varphi(C_r)  W_r^{(j,t)}  > 1 \right| \mathcal{F}_j \right) 1\left( \sup_{m \geq n}  d_{p+\delta_j} (\hat F_{j,m}, F_{j})^{p+\delta_j} \leq \eta \right) \right].
\end{align*}
Moreover, if we let $q_j = p+\delta_j$ and use Lemma~\ref{L.InverseBound}, we obtain that, conditionally on $\mathcal{F}_j$, 
\begin{align*}
W_r^{(j,t)} &\leq  \sup_{m \geq t} \left(  \frac{\log m}{m}  \right)^{1/q_{j+1}} \left| \hat F_{j,m}^{-1}(U_r)  \right| + \sup_{m \geq t}  \left( \frac{\log m}{m}  \right)^{1/q_{j+1} } | F_j^{-1}(U_r) | \\
&\leq \sup_{m \geq t}  \left(  \frac{\log m}{m}  \right)^{1/q_{j+1}} \left( E\left[ \left. | \hat F_{j,m}^{-1}(U_r) |^{q_j} \right| \mathcal{F}_j \right] \right)^{1/q_j} \left\{ U_r^{-1/q_j} + (1-U_r)^{-1/q_j}  \right\}  \\
&\hspace{5mm} + \left(  \frac{\log t}{t}  \right)^{1/q_{j+1}}  | F_j^{-1}(U_r) |.
\end{align*}
Furthermore, by Minkowski's inequality, we have that on the event \linebreak $\{ \sup_{m \geq n} d_{q_j} (\hat F_{j,m}, F_j)^{q_j} \leq \eta\}$, 
\begin{align*}
&\sup_{m \geq t}  \left(  \frac{\log m}{m}  \right)^{1/q_{j+1}} \left( E\left[ \left. | \hat F_{j,m}^{-1}(U_r) |^{q_j} \right| \mathcal{F}_j \right] \right)^{1/q_j} \\
&\leq \sup_{m \geq t}  \left(  \frac{\log m}{m}  \right)^{1/q_{j+1}} \left\{ \left( E\left[ \left. | \hat F_{j,m}^{-1}(U_r) - F_j^{-1}(U_r) |^{q_j} \right| \mathcal{F}_j \right] \right)^{1/q_j}  + || F_j^{-1}(U_r) ||_{q_j} \right\} \\
&= \sup_{m \geq t}  \left(  \frac{\log m}{m}  \right)^{1/q_{j+1}} \left\{ d_{q_j}(\hat F_{j,m}, F_j)  + || R^{(j)} ||_{q_j} \right\} \\
&\leq  \left(  \frac{\log t}{t}  \right)^{1/q_{j+1}} \left\{ \eta^{1/q_j}  + || R^{(j)} ||_{q_j} \right\} . 
\end{align*}
It follows that conditionally on $\mathcal{F}_j$, we have that on the event \linebreak $\{ \sup_{m \geq n} d_{q_j} (\hat F_{j,m}, F_j)^{q_j} \leq \eta\}$,
$$W_r^{(j,t)} \leq \left( \frac{\log t}{t} \right)^{1/q_{j+1}} \left\{ K_j \left( U_r^{-1/q_j} + (1-U_r)^{-1/q_j} \right) + |F_j^{-1}(U_r)| \right\},$$
where $K_j \triangleq \eta^{1/q_j}  + || R^{(j)} ||_{q_j} < \infty$ by Remark~\ref{R.Assumptions}(ii).

Thus, we have that on the event $\{ \sup_{m \geq n} d_{q_j} (\hat F_{j,m}, F_j)^{q_j} \leq \eta\}$, the union bound and Markov's inequality yield
\begin{align*}
&P\left( \left. \sum_{r=1}^N \varphi(C_r)  W_r^{(j,t)}  > 1 \right| \mathcal{F}_j \right) \\
&\leq P\left( \sum_{r=1}^N \varphi(C_r)   \left\{ K_j \left( U_r^{-1/q_j} + (1-U_r)^{-1/q_j} \right) + |F_j^{-1}(U_r)| \right\}  > \left( \frac{t}{\log t} \right)^{1/q_{j+1}} \right) \\
&\leq P\left(  \sum_{r=1}^N \varphi(C_r)   K_j U_r^{-1/q_j}   > \frac{1}{3} \left( \frac{t}{\log t} \right)^{1/q_{j+1}}  \right) \\
&\hspace{5mm} + P\left(  \sum_{r=1}^N \varphi(C_r)   K_j (1-U_r)^{-1/q_j} > \frac{1}{3} \left( \frac{t}{\log t} \right)^{1/q_{j+1}} \right) \\
&\hspace{5mm} + P\left(  \sum_{r=1}^N \varphi(C_r)   |F_j^{-1}(U_r)|   > \frac{1}{3} \left( \frac{t}{\log t} \right)^{1/q_{j+1}}  \right) \\
&\leq 3^{r_j} \left( \frac{\log t}{t} \right)^{r_j/q_{j+1}} \left\{ 2 E\left[ \left( \sum_{i=1}^N \varphi(C_i) K_j U_i^{-1/q_j} \right)^{r_j} \right] \right. \\
&\hspace{5mm} \left. + E\left[ \left( \sum_{i=1}^N \varphi(C_i) R_i^{(j)} \right)^{r_j} \right]   \right\} ,
\end{align*}
where by assumption $q_{j+1} < r_j < q_j$, and we have used the observation that $U_i \stackrel{\mathcal{D}}{=} 1- U_i$. Finally, note that by Remark~\ref{R.Assumptions}(i), we have
$$E\left[ \left( \sum_{i=1}^N \varphi(C_i) K_j U_i^{-1/q_j} \right)^{r_j} \right] \leq 2 E[ Z^{r_j} ] K_j^{r_j} E[ U_1^{-r_j/q_j}]  = \frac{2 K_j^{r_j} || Z ||_{r_j}^{r_j}}{1 - r_j/q_j} $$
and
$$E\left[ \left( \sum_{i=1}^N \varphi(C_i) R_i^{(j)} \right)^{r_j} \right] \leq 2 E[Z^{r_j}] E\left[ |R^{(j)}|^{r_j} \right] = 2 || Z ||_{r_j}^{r_j} || R^{(j)} ||_{r_j}^{r_j}.$$

We conclude that
\begin{align*}
&P\left(  \sup_{m \geq t} \frac{\log m}{m} X_1^{(m)} > 1, \, \sup_{m \geq n}  d_{p+\delta_j} (\hat F_{j,m}, F_{j})^{p+\delta_j} \leq \eta \right) \\
&\leq 3^{r_j} \left( \frac{\log t}{t} \right)^{r_j/q_{j+1}} \left\{ \frac{4 K_j^{r_j} ||Z||_{r_j}^{r_j} }{1- r_j/q_j} + 2 || Z ||_{r_j}^{r_j} || R^{(j)} ||_{r_j}^{r_j}   \right\}.
\end{align*}
\end{proof}

\bigskip

We are now ready to prove Theorem~\ref{T.AlmostSure2}, which proves by induction that $d_{p+\delta}(\hat F_{k,m}, F_k) \to 0$ a.s. as $m \to \infty$.

\bigskip

\begin{proof}[Proof of Theorem~\ref{T.AlmostSure2}]
Define $\delta_j = \delta (k-j)/k$ for $0 \leq j \leq k$. We will prove by induction in $j$ that 
\begin{equation} \label{eq:InductionHypothesis}
\lim_{m \to \infty}  d_{p+\delta_j}(\hat F_{j,m}, F_j) = 0 \qquad \text{a.s}
\end{equation}
for $0 \leq j \leq k$. Since $\hat F_{0,m}(x) \equiv F_{0,m}(x)$ for all $x \in \mathbb{R}$ and $E[|R_0|^{p+\delta}] < \infty$,  the Glivenko-Cantelli lemma and the strong law of large numbers yield
\begin{align*} 
\sup_{x \in \mathbb{R}} \left| F_{0,m}(x) - F_0(x) \right| &\to 0 \qquad \text{a.s. as $m \to \infty$} \qquad \text{ and } \\
 \frac{1}{m} \sum_{i=1}^m |R^{(0)}_i |^{p+\delta} = \int_{-\infty}^\infty |x|^{p+\delta} dF_{0,m}(x) &\to \int_{-\infty}^\infty |x|^{p+\delta} dF_0(x) \quad \text{a.s. as $m \to \infty$}.
 \end{align*}
Therefore, by Definition~6.8 and Theorem~6.9 in \cite{Villani_2009}, 
$$\lim_{m \to \infty} d_{p+\delta_0} (\hat F_{0,m}, F_0) = \lim_{m \to \infty} d_{p+\delta}(F_{0,m}, F_0) = 0 \quad \text{a.s.}$$

Suppose now that \eqref{eq:InductionHypothesis} holds for $0 \leq j < k$.  To prove that \linebreak $d_{p+\delta_{j+1}}(\hat F_{j+1,m}, F_{j+1}) \to 0$ a.s.~as $m \to \infty$, we start by constructing the random variables $\{ (\hat R_i^{(t,m)}, R_i^{(t)}) : 1 \leq i \leq m, \, 0 \leq t \leq k \}$ as explained at the beginning of this section. Now note that for any $\epsilon, \eta > 0$, 
\begin{align}
&P\left(  \sup_{m \geq n}  d_{p+\delta_{j+1}} (\hat F_{j+1,m}, F_{j+1})^{p+\delta_{j+1}} > 2^{p+\delta_{j+1}} \epsilon \right) \notag \\
&\leq P\left( \sup_{m \geq n}  \left\{ d_{p+\delta_{j+1}} (\hat F_{j+1,m}, F_{j+1,m}) + d_{p+\delta_{j+1}} (F_{j+1,m}, F_{j+1})  \right\} > 2 \epsilon^{1/(p+\delta_{j+1})} \right)
 \notag  \\
 &\leq P\left( \sup_{m \geq n} d_{p+\delta_{j+1}} (\hat F_{j+1,m}, F_{j+1,m}) > \epsilon^{1/(p+\delta_{j+1})} \right) \notag \\
 &\hspace{5mm} + P\left( \sup_{m \geq n} d_{p+\delta_{j+1}} ( F_{j+1,m}, F_{j+1}) > \epsilon^{1/(p+\delta_{j+1})} \right)\notag \\
 &\leq P\left( \sup_{m \geq n} d_{p+\delta_{j+1}} ( \hat F_{j+1,m}, F_{j+1,m})^{p+\delta_{j+1}} > \epsilon, \, \sup_{m \geq n}  d_{p+\delta_j} (\hat F_{j,m}, F_{j})^{p+\delta_j} \leq \eta   \right) \label{eq:ExponentialTerm} \\
 &\hspace{5mm} + P\left( \sup_{m \geq n}  d_{p+\delta_j} (\hat F_{j,m},  F_{j})^{p+\delta_j} > \eta \right) \label{eq:InductionStep} \\
  &\hspace{5mm} + P\left( \sup_{m \geq n}  d_{p+\delta_{j+1}} (F_{j+1,m}, F_{j+1})^{p+\delta_{j+1}} > \epsilon \right) .   \label{eq:LLNStep}
 \end{align}

To analyze \eqref{eq:InductionStep} note that its convergence to zero as $n \to \infty$ is equivalent to the a.s.~convergence of $d_{p+\delta_j} (\hat F_{j,m},  F_{j})$ to zero as $m \to \infty$, which corresponds to the induction hypothesis \eqref{eq:InductionHypothesis}.

To show that \eqref{eq:LLNStep} converges to zero as $n \to \infty$, note that by Remark~\ref{R.Assumptions}(ii) we have $E[ |R^{(j+1)}|^{p+\delta} ] < \infty$, which implies that $E[ |R^{(j+1)}|^{p+\delta_{j+1}} ] < \infty$. Hence, the Glivenko-Cantelli lemma, the strong law of large numbers, and Definition~6.8 and Theorem~6.9 in \cite{Villani_2009} give that $\lim_{m \to \infty} d_{p+\delta_{j+1}}(F_{j+1,m}, F_{j+1}) = 0$ a.s., which is equivalent to
\begin{equation*} 
 \lim_{n \to \infty} P\left( \sup_{m \geq n}  d_{p+\delta_{j+1}} (F_{j+1,m}, F_{j+1})^{p+ \delta_{j+1}} > \epsilon \right) = 0.
\end{equation*}

Next, to prove that \eqref{eq:ExponentialTerm}  converges to zero we first define the random variables $\{ Y_i^{(j,m)}: 1 \leq i \leq m\}$ according to \eqref{eq:Xdefinition}, and define the events
$$A_{i,n} = \left\{ \sup_{m \geq n \vee i} \frac{\log m}{m}  Y_i^{(j,m)} \leq 1 \right\}.$$
Now use \eqref{eq:ExplicitBound} and Assumption~\ref{A.Lipschitz} to obtain
\begin{align}
&P\left( \sup_{m \geq n} d_{p+\delta_{j+1}} ( \hat F_{j+1,m}, F_{j+1,m})^{p+\delta_{j+1}} > \epsilon, \, \sup_{m \geq n}  d_{p+\delta_j} (\hat F_{j,m}, F_{j})^{p+\delta_j} \leq \eta   \right) \notag \\
&\leq P\left( \sup_{m \geq n} \frac{1}{m} \sum_{i=1}^m \left| \hat R_i^{(j+1,m)} - R_i^{(j+1)} \right|^{p+\delta_{j+1}}  > \epsilon, \, \sup_{m \geq n}  d_{p+\delta_j} (\hat F_{j,m}, F_{j})^{p+\delta_j} \leq \eta   \right) \notag \\
&\leq P\left( \sup_{m \geq n} \frac{1}{m} \sum_{i=1}^m Y_i^{(j,m)}  > \epsilon, \, \sup_{m \geq n}  d_{p+\delta_j} (\hat F_{j,m}, F_{j})^{p+\delta_j} \leq \eta, \, \bigcap_{i=1}^\infty A_{n,i}   \right) \notag \\
&\hspace{5mm} + P\left(  \sup_{m \geq n}  d_{p+\delta_j} (\hat F_{j,m}, F_{j})^{p+\delta_j} \leq \eta, \, \bigcup_{i=1}^\infty A_{n,i}^c   \right) \notag \\
&\leq P\left( \sup_{m \geq n} \frac{1}{m} \sum_{i=1}^m Y_i^{(j,m)} 1(Y_i^{(j,m)} \leq m/\log m)  > \epsilon, \, \sup_{m \geq n}  d_{p+\delta_j} (\hat F_{j,m}, F_{j})^{p+\delta_j} \leq \eta  \right) \label{eq:Chernoff} \\
&\hspace{5mm} + \sum_{i=1}^\infty P\left(  A_{n,i}^c, \, \sup_{m \geq n}  d_{p+\delta_j} (\hat F_{j,m}, F_{j})^{p+\delta_j} \leq \eta   \right). \label{eq:Maximum}
\end{align}

To analyze \eqref{eq:Chernoff}, choose $\eta = \left( \epsilon^{-1} 4 e^{2/\epsilon} \max\{ 1, E[Z^{p+\delta}]\}  \right)^{-(p+\delta_j)/(p+\delta_{j+1})}$ and let $\mathcal{F}_j = \sigma( \mathscr{E}_j)$ denote the sigma-algebra generated by $\mathscr{E}_j$, as given by \eqref{eq:AllRandomVectors}. Note that
\begin{align*}
&P\left( \sup_{m \geq n} \frac{1}{m} \sum_{i=1}^m Y_i^{(j,m)} 1(Y_i^{(j,m)} \leq m/\log m)  > \epsilon, \, \sup_{m \geq n}  d_{p+\delta_j} (\hat F_{j,m}, F_{j})^{p+\delta_j} \leq \eta  \right) \\
&= E\left[ P\left( \left.\sup_{m \geq n} \frac{1}{m} \sum_{i=1}^m Y_i^{(j,m)} 1(Y_i^{(j,m)} \leq m/\log m)  > \epsilon \right| \mathcal{F}_j \right) \right. \\
&\hspace{15mm} \left.  \cdot1\left( \sup_{m \geq n}  d_{p+\delta_j}(\hat F_{j,m}, F_{j})^{p+\delta_j} \leq \eta   \right) \right] .
\end{align*}

By Lemma~\ref{L.ConditionalSums}, we obtain that on the event $\left\{  \sup_{m \geq n}  d_{p+\delta_j}(\hat F_{j,m}, F_{j})^{p+\delta_j} \leq \eta \right\}$, we have
$$P\left( \left.\sup_{m \geq n} \frac{1}{m} \sum_{i=1}^m Y_i^{(j,m)} 1(Y_i^{(j,m)} \leq m/\log m)  > \epsilon \right| \mathcal{F}_j \right)  \leq 2 (n-1)^{-1/2},$$
which implies that \eqref{eq:Chernoff} is bounded from above by $2(n-1)^{-1/2}$. 

To analyze \eqref{eq:Maximum} note that
\begin{align*}
&\sum_{i=1}^\infty P\left(  A_{n,i}^c, \, \sup_{m \geq n}  d_{p+\delta_j} (\hat F_{j,m}, F_{j})^{p+\delta_j} \leq \eta   \right) \\
&= n P\left(  \sup_{m \geq n} \frac{\log m}{m} Y_1^{(j,m)} > 1, \, \sup_{m \geq n}  d_{p+\delta_j} (\hat F_{j,m}, F_{j})^{p+\delta_j} \leq \eta \right) \\
&\hspace{5mm} + \sum_{t=n+1}^\infty P\left(  \sup_{m \geq t} \frac{\log m}{m} Y_1^{(j,m)} > 1, \, \sup_{m \geq n}  d_{p+\delta_j} (\hat F_{j,m}, F_{j})^{p+\delta_j} \leq \eta \right).
\end{align*}
Now set $q_j = p + \delta_j$ and $r_j = q_{j+1} + \delta/(2k)$, and note that $q_{j+1} < r_j < q_j \leq p+\delta$. Then, by Lemma~\ref{L.CleverBound}, 
\begin{align*}
P\left(  \sup_{m \geq t} \frac{\log m}{m} Y_1^{(j,m)} > 1, \, \sup_{m \geq n}  d_{p+\delta_j} (\hat F_{j,m}, F_{j})^{p+\delta_j} \leq \eta \right) \leq \tilde K_j \left( \frac{\log t}{t} \right)^{r_j/q_{j+1}} 
\end{align*}
for any $t \geq n$, where
$$\tilde K_j =  3^{r_j} \left|\left|  \sum_{i =1}^N \varphi(C_i) \right|\right|_{r_j}^{r_j} \left\{ \frac{4 (\eta^{1/q_j}  + || R^{(j)} ||_{q_j} )^{r_j}  }{1- r_j/q_j} + 2 || R^{(j)} ||_{r_j}^{r_j}   \right\} < \infty$$
by Remark~\ref{R.Assumptions}(ii). It follows that \eqref{eq:Maximum} is bounded from above by
\begin{align*}
&\tilde K_j n \left( \frac{\log n}{n} \right)^{r_j/q_{j+1}} + \tilde K_j \sum_{t=n+1}^\infty \left( \frac{\log t}{t} \right)^{r_j/q_{j+1}} \\
&\leq  \tilde K_j n \left( \frac{\log n}{n} \right)^{r_j/q_{j+1}} + \tilde K_j \sum_{t=n+1}^\infty \int_{t-1}^t \left( \frac{\log x}{x} \right)^{r_j/q_{j+1}} \, dx \\
&= \tilde K_j n \left( \frac{\log n}{n} \right)^{r_j/q_{j+1}} + \tilde K_j  \int_{n}^\infty \left( \frac{\log x}{x} \right)^{r_j/q_{j+1}} \, dx
\end{align*}
for all $n \geq 3$. Since $r_j/q_{j+1} > 1$ and
$$\int_{n}^\infty \left( \frac{\log x}{x} \right)^{r_j/q_{j+1}} = \frac{(\log n)^{r_j/q_{j+1}}}{(r_j/q_{j+1} - 1) n^{r_j/q_{j+1} - 1}} (1 + o(1))$$
as $n \to \infty$, we conclude that \eqref{eq:ExponentialTerm}  is bounded from above by
$$2(n-1)^{-1/2} + \tilde K_j \left( 1 + \frac{1}{r_j/q_{j+1} - 1} + o(1) \right) \frac{(\log n)^{r_j/q_{j+1}} }{n^{r_j/q_{j+1}-1}},$$
which converges to zero as $n \to \infty$. This completes the proof. 
\end{proof}

\bigskip

We now give below the proof of Proposition~\ref{P.Consistency}.

\bigskip

\begin{proof}[Proof of Proposition~\ref{P.Consistency}]
The second statement of the proposition, regarding the almost sure convergence, follows directly from Definition~6.8 and Theorem~6.9 in \cite{Villani_2009}. For the convergence in probability we argue as follows.

Define $\Theta_{k,m} = \frac{1}{m} \sum_{i=1}^m h(\hat R_i^{(k,m)})$ and $\theta_k = E[h(R^{(k)})]$. By assumption, we have that $d_p(\hat F_{k,m}, F_k) \to 0$ in $L_p$ and therefore in probability, as $m \to \infty$. Hence, for every subsequence $\{m_i\}_{i \geq 1}$ there is a further subsequence $\{ m_{i_j} \}_{j \geq 1}$ such that $d_p(\hat F_{k,m_{i_j}}, F_k) \to 0$ a.s. as $j \to \infty$. Definition~6.8 and Theorem~6.9 in \cite{Villani_2009} now give that
\begin{equation} \label{eq:Subsequence}
\Theta_{k,m_{i_j}} \to \theta_k \quad \text{a.s. as } j \to \infty.
\end{equation}
We conclude that for any subsequence $\{m_i\}_{i \geq 1}$ we can find a further subsequence $\{m_{i_j}\}_{j \geq 1}$ such that \eqref{eq:Subsequence} holds, and therefore,
$$\Theta_{k,m} \xrightarrow{P} \theta_k \quad \text{ as } m \to \infty.$$
\end{proof}

The remaining two proofs in the paper correspond to Theorem~\ref{T.LpConvergence} and Lemma~\ref{L.Moments}, which although not directly related to the Population Dynamics algorithm, may be of independent interest.

\begin{proof}[Proof of Theorem~\ref{T.LpConvergence}]
Suppose first that Assumption~\ref{A.PhiAssumption} holds for any i.i.d.~$\{(X_i, Y_i): i \geq 1\}$ independent of $(Q, N, \{C_i\})$.  Recall that $F_k(x) = P(R^{(k)} \leq x)$. Then, for any $j \in \mathbb{N}_+$ we have
\begin{align}
d_p(F_{j}, F_{j-1}) \notag &\leq \left(  E\left[  \left| \Phi(Q, N, \{C_r\}, \{F_{j-1}^{-1}(U_r) \}) - \Phi(Q, N, \{C_r\}, \{F_{j-2}^{-1}(U_r) \})  \right|^p \right] \right)^{1/p} \notag \\
&\leq H_p^{1/p} \left( E\left[ \left| F_{j-1}^{-1}(U_1) - F_{j-2}^{-1}(U_1) \right|^p  \right] \right)^{1/p} \notag \\
&= H_p^{1/p} d_p(F_{j-1}, F_{j-2}) \notag \\
&\leq (H_p^{1/p})^{j-1} d_p(F_1, F_0). \label{eq:Contraction}
\end{align}
Moreover, 
\begin{align}
d_p(F_1, F_0) &\leq \left(  E\left[  \left| \Phi(Q, N, \{C_r\}, \{F_{0}^{-1}(U_r) \}) - \Phi(Q, N, \{C_r\}, \{ 0 \})  \right|^p \right] \right)^{1/p} \notag \\
&\hspace{5mm} + \left(  E\left[  \left|  \Phi(Q, N, \{C_r\}, \{ 0 \})  \right|^p \right] \right)^{1/p} \notag \\
&\leq H_p^{1/p} \left( E[ |R^{(0)} |^p] \right)^{1/p} + \left(  E\left[  \left|  \Phi(Q, N, \{C_r\}, \{ 0 \})  \right|^p \right] \right)^{1/p}. \label{eq:FirstStepMoment}
\end{align}
It follows that for any $m \in \mathbb{N}_+$ we have
\begin{align*}
d_p(F_{k+m}, F_k) &\leq \sum_{j=1}^m d_p(F_{k+j}, F_{k+j-1}) \leq  \sum_{j=1}^m (H_p^{1/p})^{k+j-1} d_p(F_1, F_0) \\
&\leq (H_p^{1/p})^{k} d_p(F_1, F_0) \sum_{j=0}^{m-1} (H_p^{1/p})^{j},
\end{align*}
which converges to zero as $k \to \infty$ uniformly in $m$ whenever $H_p < 1$ and $E\left[ |R_0|^p + | \Phi(Q, N, \{C_r\}, \{ 0\})|^p \right]$. Therefore, the sequence $\{ R^{(k)}: k \geq 0\}$ is Cauchy, and since the Wasserstein space $P_p(\mathbb{R})$ metrized by $d_p$ (see Definition~6.4 in \cite{Villani_2009}) is complete by Theorem~6.18 in \cite{Villani_2009}, we have that there exists a random variable $R$ having distribution $F_*(x) = P(R \leq x)$ such that
$$\lim_{k \to \infty} d_p(F_k, F_*) = 0.$$
Equation \eqref{eq:GeomConv} now follows by taking $m \to \infty$ to obtain:
$$d_p(F_k, F_*)^p = \lim_{m \to \infty} d_p(F_k, F_{k+m})^p \leq d_p(F_1, F_0)^p \frac{H_p^{k}}{(1 - H_p^{1/p})^p}$$
and using the optimal coupling $(R^{(k)}, R) = (F_k^{-1}(U), F_*^{-1}(U))$. \\

We now move to the linear SFPE \eqref{eq:Linear}, for which it is known (see \cite{Jel_Olv_12b}) that $R$ admits the explicit representation
$$R = \sum_{k=0}^\infty \sum_{{\bf i} \in A_k} \Pi_{\bf i} Q_{\bf i},$$
as described in Section~\ref{SS.EndogenousSol}. When conditions (i) hold we  have $E[ R^{(k)} ] = 0$ for all $k \geq 0$ and the arguments used above remain valid. 

Suppose now that conditions (ii) hold, in which case we can take $R^{(k)} = \sum_{j=0}^{k-1} \sum_{{\bf i} \in A_j} \Pi_{\bf i} Q_{\bf i} + \sum_{{\bf i} \in A_k} \Pi_{\bf i} R^{(0)}_{\bf i}$, where the $\{ R^{(0)}_{\bf i}: {\bf i} \in U\}$ are i.i.d.~copies of $R^{(0)}$. Therefore, Minkowski's inequality gives
\begin{align*}
E\left[ | R^{(k)} - R |^p \right] &\leq E\left[ \left( \sum_{j=k}^\infty \sum_{{\bf i} \in A_j}  |\Pi_{\bf i}| |Q_{\bf i}| + \sum_{{\bf i} \in A_j}  |\Pi_{\bf i}| |R^{(0)}_{\bf i}| \right)^p \right] \\
&\leq \left( \sum_{j=k}^\infty \left( E\left[ \left(  W_j \right)^p \right] \right)^{1/p} +  \left( E\left[ \left(  W_k(R^{(0)}) \right)^p \right] \right)^{1/p}  \right)^p,
\end{align*}
where $W_j \triangleq  \sum_{{\bf i} \in A_j}  |\Pi_{\bf i}| |Q_{\bf i}|$ and $W_k(R^{(0)}) \triangleq \sum_{{\bf i} \in A_k}  |\Pi_{\bf i}| |R^{(0)}_{\bf i}|$. Now use Lemma~4.4 in \cite{Jel_Olv_12b} to obtain that under conditions (ii) there exist a constants $K_p, K_p' < \infty$ such that
$$E[ |W_j|^p ] \leq K_p ( \rho_1 \vee \rho_p )^j \qquad \text{and} \qquad E[|W_k(R^{(0)} )|^p ] \leq K_p (\rho_1 \vee \rho_p)^k,$$
where $\rho_\beta \triangleq E\left[ \sum_{i=1}^N |C_i|^\beta \right]$. Hence,
\begin{align*}
E\left[ | R^{(k)} - R |^p \right] &\leq \left( (K_p + K_p') \sum_{j=k-1}^\infty (\rho_1 \vee \rho_p)^{j/p}  \right)^p \\
&\leq \left( \frac{K_p+K_p'}{1 - (\rho_1 \vee \rho_p)^{1/p}} \right)^p (\rho_1 \vee \rho_p)^{k-1}.
\end{align*}
This completes the proof.
\end{proof}

Finally, we provide the proof of Lemma~\ref{L.Moments}.

\begin{proof}[Proof of Lemma~\ref{L.Moments}]
By \eqref{eq:Contraction} we have for any $j \in \mathbb{N}_+$, 
$$d_p(F_j, F_{j-1}) \leq (H_p^{1/p})^{j-1} d_p(F_1, F_0),$$
and by \eqref{eq:FirstStepMoment}, 
$$d_p(F_1, F_0) \leq H_p^{1/p} \left( E[ |R^{(0)}|^p] \right)^{1/p} + \left(  E\left[  \left|  \Phi(Q, N, \{C_r\}, \{ 0 \})  \right|^p \right] \right)^{1/p} \triangleq A_p'. $$
Hence,
$$d_p(F_k, F_0) \leq \sum_{i=1}^k d_p(F_i, F_{i-1}) \leq A_p' \sum_{i=1}^k (H_p^{1/p})^{i-1},$$
and we obtain that
\begin{align*}
\left( E\left[ |R^{(k)}|^p \right] \right)^{1/p} &\leq \left( E\left[ |F_k^{-1}(U) - F_0^{-1}(U) |^p \right] \right)^{1/p} + \left( E\left[ |R^{(0)}|^p \right] \right)^{1/p} \\
&= d_p(F_k, F_0) + \left( E\left[ |R^{(0)}|^p \right] \right)^{1/p} \\
&\leq A_p' \sum_{i=1}^k (H_p^{1/p})^{i-1}  + \left( E\left[ |R^{(0)}|^p \right] \right)^{1/p} \\
&\leq \left(A_p' + \left( E\left[ |R^{(0)}|^p \right] \right)^{1/p} \right) \sum_{i=0}^{k-1} (H_p^{1/p})^i  . 
\end{align*}
\end{proof}

\bibliographystyle{plain}
\bibliography{PopulationDynamicsBib}

\end{document}